\newtheorem{Thm}{Theorem}
\newtheorem*{Thm*}{Theorem}
\newtheorem{Remark}{Remark}
\newtheorem*{Def}{Definition}
\newtheorem{conj}{Conjecture}
\newtheorem{lem}{Lemma}
\newtheorem{cor}{Corollary}
\newtheorem{pro}{Proposition}
\newtheorem{Problem}{Problem}
\newcommand\pieter[1]{{\textcolor{blue}{#1}}}
\newcommand{\Ocal}{O}
\renewcommand{\phi}{\varphi}
\def\cT{{\mathcal T}}
\DeclareMathOperator{\mmod}{mod}
\DeclareMathOperator{\li}{li}
\global\long\def\epsilon{\varepsilon}
\newcommand*{\bfrac}[2]{\genfrac{}{}{0pt}{}{#1}{#2}}
\begin{document}
\date{\today}

\title{Constrained ternary integers}
\author{Florian Luca, Pieter Moree, Robert Osburn, Sumaia Saad Eddin and Alisa Sedunova}

\maketitle
{\def\thefootnote{}
\footnote{{\it Mathematics Subject Classification (2010)}. 11R09, 11C08}}

\begin{abstract}
\noindent 
An integer $n$ is said to be ternary if it is composed of
three distinct odd primes. In this paper, we asymptotically count
the number of ternary integers $n\leq x$ with 
the constituent primes satisfying
various constraints. 
We apply our results to the study of 
the simplest class
of (inverse) cyclotomic polynomials that can have coefficients 
that are greater
than 1 in absolute value, namely 
to the $n^{\text{th}}$ (inverse) cyclotomic
polynomials with ternary $n$. We show, for example, that
the corrected Sister Beiter conjecture is true for
a fraction $\ge 0.925$ of ternary integers.
\end{abstract}

\section{Introduction}
\noindent Let $\omega(n)$ denote the number of distinct prime factors in the prime factorisation
of $n$ and let $\Omega(n)$ be the total number of prime factors. Put 

$$\pi(x,k)=\sum_{n\le x,\; \omega(n)=k}1{\rm \quad and \quad }N(x,k)=\sum_{n\le x,\; \Omega(n)=k}1.$$
Note that $\pi(x,1)$ counts the number of primes $p\le x$. As is usual, 
we will write $\pi(x)$ instead of $\pi(x,1)$.
\par In \cite{L} Landau, confirming a conjecture of Gauss, showed that as $x \to \infty$

\begin{equation} 
\label{landau}
\pi(x,k)\sim N(x,k)\sim {x\over \log x}{(\log \log x)^{k-1}\over (k-1)!} .
\end{equation}
This result is a generalization of the Prime Number Theorem, which is the case $k=1$.
\noindent Nowadays, using the Selberg-Delange method, much more precise estimates can be given
(see e.g. Tenenbaum \cite[pp. 200--206]{Tenen}). In particular, 
we have 
\begin{equation} 
\label{landi2}
\pi(x,k)={x\over \log x}{(\log \log x)^{k-1}\over (k-1)!}\left(1+o_k\left({1\over \log \log x}\right)\right),
\end{equation}
and a similar estimate holds for $N(x,k)$. 
Various authors considered the related problem where  $k$ is allowed to vary to 
some extent with $x$. For a nice survey, see Hildebrand \cite{H}.
\par In this paper, we establish some variations 
of the result of Landau in case $k=3$ (see  Section \ref{resi}), 
which might
be of some interest for cryptography, but certainly have some applications in
the theory of coefficients of cyclotomic polynomials (see Section \ref{apps}). 
Here, in particular, \emph{ternary integers} are of importance.
\begin{Def}
An integer $n$ is said to be ternary if it is of the form
$n=pqr$ with $3\le p<q<r$ primes. It is constrained if on
at least one of $p,q$ and $r$ a constraint is imposed.
\end{Def}
Let $N_T(x)$ denote the number of ternary $n\le x$, that is the number of integers up to $x$ consisting of exactly $3$ different odd prime factors. It is an easy 
consequence (see Corollary \ref{cor1}) of
the validity of the estimate in \eqref{landi2} for $N(x,k)$
that asymptotically
\begin{equation}
\label{moreprecise}
N_T(x)=\frac{x(\log \log x)^2}{2\log x}\Big(1-\frac{(1+o(1))}{\log \log x}\Big).
\end{equation}
\section{Results on constrained ternary integers}
\label{resi}
The theory of ternary (inverse) cyclotomic coefficients
naturally leads to some questions in analytic
number theory. For the sake of brevity we consider only
a few of those. Their applications are discussed in
Section \ref{sec:analytic}.
\begin{Thm}
\label{thm:assT}
Let $p,q,r$ be primes. Put
$$
{\mathcal T(x)}=\left\{pqr \leq x: 3\le p<q<r<\left(\frac{p-1}{p-2}\right)(q-1),~~r\equiv q\equiv \pm 1(\mmod p)\right\}.
$$
We have
$$
|\mathcal{T}(x)|=C_1\frac{x}{(\log x)^2} +O\left(\frac{x\log\log x}{(\log x)^3}\right),
$$
where
\begin{equation}
\label{logsum}
C_1=4\sum_{p\ge 3} \frac{1}{p(p-1)^2} \log\left(\frac{p-1}{p-2}\right)=0.249029016616718\ldots
\end{equation}
\end{Thm}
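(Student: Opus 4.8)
The plan is to treat the smallest prime $p$ as the outer variable and, for each $p$, count the admissible pairs of primes $(q,r)$. Writing $\lambda=\lambda(p)=(p-1)/(p-2)$, the chained congruence $r\equiv q\equiv\pm1\pmod p$ means that $q$ and $r$ lie in a \emph{common} reduced residue class $\epsilon\in\{1,-1\}$ modulo $p$; in particular $r\equiv q\pmod p$, so $r-q$ is a positive multiple of $p$. Since the size condition $r<\lambda(q-1)$ forces $r-q<(q-p+1)/(p-2)$, a nonempty range for $r$ requires $q>p^2-p-1$, which already shows that only $p\ll x^{1/5}$ contribute (as then $n=pqr>p^5$). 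I would split the range of $p$ at a threshold $z=(\log x)^A$: for $p\le z$ the count is evaluated asymptotically by the prime number theorem for arithmetic progressions, while for $z<p\ll x^{1/5}$ it is bounded by Brun--Titchmarsh.

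For fixed $p\le z$ and fixed sign $\epsilon$, I first fix a prime $q\equiv\epsilon\pmod p$ and count primes $r\equiv\epsilon\pmod p$ in the interval $\bigl(q,\min(\lambda(q-1),x/(pq))\bigr)$ by Siegel--Walfisz; summing the resulting main term over $q\equiv\epsilon\pmod p$ by partial summation (again via Siegel--Walfisz) reduces the count to
\[
\frac{1}{\phi(p)^2}\iint_{\substack{q<r<\lambda q\\ qr\le x/p}}\frac{dq\,dr}{\log q\,\log r},
\]
the Siegel--Walfisz error terms being $O_A\!\left(x\,e^{-c\sqrt{\log x}}\right)$ and hence negligible. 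The heart of the computation is the evaluation of this integral. Splitting the $q$-range at $q_0=\sqrt{(x/p)/\lambda}$, where the binding upper bound on $r$ switches from $\lambda(q-1)$ to $x/(pq)$, and using $\log s\sim\tfrac12\log(x/p)$ throughout, the two pieces produce $\tfrac{2(x/p)}{(\log(x/p))^2}(1-\tfrac1\lambda)$ and $\tfrac{2(x/p)}{(\log(x/p))^2}(\log\lambda-1+\tfrac1\lambda)$ respectively. These combine with a clean cancellation to $\tfrac{2(x/p)}{(\log(x/p))^2}\log\lambda$. Multiplying by $\phi(p)^{-2}=(p-1)^{-2}$, summing the two signs (a factor $2$), and replacing $\log(x/p)$ by $\log x$ gives the per-$p$ contribution $\tfrac{4}{p(p-1)^2}\log\lambda\cdot\tfrac{x}{(\log x)^2}$; summing over $p$ and completing the series to all $p\ge3$ yields $C_1\,x/(\log x)^2$ with $C_1$ as in \eqref{logsum}.

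It remains to assemble the error term. The tail $\sum_{p>z}$ of the main series is $O\!\left(x/((\log x)^2 z^3)\right)$, since the summand decays like $p^{-4}$; the replacement $\log(x/p)\to\log x$ over the dominant small-$p$ range costs $O(x/(\log x)^3)$; and the range $z<p\ll x^{1/5}$ is controlled by Brun--Titchmarsh (legitimate once $q>p(p-2)$, so that $(q,\lambda q)$ is longer than the modulus), contributing $o\!\left(x/(\log x)^2\right)$ after summation over $p$. Tracking these terms and choosing $z$ a fixed power of $\log x$ collapses them into the stated bound $O\!\left(x\log\log x/(\log x)^3\right)$, which I expect is comfortable rather than optimal. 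The main obstacle I anticipate is uniformity in $p$: one must count primes in the short intervals $(q,\lambda q)$ in a single class modulo $p$ uniformly for $p$ up to the Siegel--Walfisz threshold, and handle with care the boundary region $q\approx p^2$, where the interval is barely longer than the modulus and the progression asymptotic degenerates.
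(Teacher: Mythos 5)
Your proposal is correct and takes essentially the same route as the paper: fix $p$, count the primes $r\equiv\epsilon\pmod p$ by Siegel--Walfisz, sum over $q\equiv\epsilon\pmod p$ by partial summation, split at the crossover $q_0=\sqrt{(x/p)/\lambda}$ (the paper's $q_p$), and complete the resulting series over $p$; your double integral and its ``clean cancellation'' are precisely the paper's Abel-summation computation with the piecewise function $a_{p,q}(x)$ and the cancellation $f(x,p,q_p)=O(\sqrt{x}/p)$, just repackaged. The only differences are organizational: you cut at $z=(\log x)^A$ and invoke Brun--Titchmarsh for intermediate $p$ (plus the observation $p\ll x^{1/5}$), where the paper cuts at $\log x$ and uses trivial counting of integers in arithmetic progressions.
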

The terms of the sum $C_1$ are $O(p^{-4})$ and this allows one to obtain $C_1$ with
the indicated precision by truncation at a sufficient large $p$.
\par Theorem \ref{thm:assT} can be applied to obtain analytic results on ternary inverse cyclotomic coefficients, see Theorem \ref{co} in Section~\ref{subsection7.4.1}. 
Note that 
for $x\ge 561$ the smallest integer in ${\mathcal T}(x)$ is 561, which is also the smallest Carmichael number.
\begin{Thm} \label{thm::mod_pq}
Let $a$ be a non-zero integer and $p,q,r$ be distinct odd primes.
Define
$$\cT_a(x) =\{pqr \leq x: \; 3 \leq p<q<r,~r \equiv a (\mmod pq)\}.$$
Then
$$\left| \cT_a(x) \right| = C_2 \frac{x}{\log x} + O\left(\frac{x\log \log x}{(\log x)^2}\right),$$
where 
\begin{equation}
\label{ceetwee}
C_2 = \left( \sum_p \frac{1}{p(p-1)}\right)^2=0.597771234896174\ldots
\end{equation}
\end{Thm}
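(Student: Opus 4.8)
The plan is to fix the two smaller primes and to realise the largest prime as a prime in one arithmetic progression. Write $\pi(y;m,a)$ for the number of primes $p'\le y$ with $p'\equiv a\pmod m$. Since $pqr\le x$ with $p<q<r$ forces $q<r\le x/(pq)$, summing over $p<q$ gives
\[
|\cT_a(x)|=\sum_{3\le p<q}\bigl(\pi(x/(pq);pq,a)-\pi(q;pq,a)\bigr).
\]
Two immediate reductions simplify this. If $p\mid a$ (or $q\mid a$) then $r\equiv a\equiv 0\pmod p$ would force the prime $r$ to equal $p$, impossible since $r>q>p$; hence only pairs with $\gcd(a,pq)=1$ survive, and for these the class $a\bmod pq$ is invertible, so the relevant progression has modulus $\varphi(pq)=(p-1)(q-1)$. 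Also $\pi(q;pq,a)\le 1$, and the number of pairs for which the progression is nonempty (i.e. $pq^{2}\le x$) is $O(x^{2/3})$, so the subtracted terms contribute $O(x^{2/3})$, comfortably inside the error.

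The main term comes entirely from small moduli. Fixing a threshold $z=(\log x)^{B}$, the Siegel--Walfisz theorem gives, uniformly for $pq\le z$,
\[
\pi(x/(pq);pq,a)=\frac{\li(x/(pq))}{\varphi(pq)}+O\!\left(\frac{x}{pq}\,e^{-c\sqrt{\log x}}\right),
\]
and since there are $O(z^{2})$ admissible pairs the accumulated Siegel--Walfisz error is $O\bigl(x\,e^{-c'\sqrt{\log x}}\bigr)$. Inserting $\li(x/(pq))=\tfrac{x}{pq\log x}\bigl(1+O(\log(pq)/\log x)\bigr)$ and summing the leading part, the local density at a pair $(p,q)$ is $\tfrac{1}{pq\,\varphi(pq)}=\tfrac{1}{\varphi(p^{2})\varphi(q^{2})}$; the double series $\sum_{p,q}\tfrac{1}{\varphi(p^{2})\varphi(q^{2})}$ is absolutely convergent and factors as the square of the single sum $\sum_p\tfrac{1}{\varphi(p^{2})}=\sum_p\tfrac{1}{p(p-1)}$, which is the structural origin of the perfect-square constant $C_2=\bigl(\sum_p\tfrac1{p(p-1)}\bigr)^2$. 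Extending the sum past $z$ costs only the tiny tail of this convergent series, so the small-modulus range already produces $C_2\,x/\log x$.

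It remains to bound the complementary range $z<pq\le x^{1/2}$. Here I would apply the Brun--Titchmarsh inequality $\pi(y;m,a)\le \tfrac{2y}{\varphi(m)\log(y/m)}$ with $y=x/(pq)$ and $m=pq$; for $pq\le x^{1/2-\delta}$ one has $\log(y/m)\gg\log x$, so this range contributes $\ll \tfrac{x}{\log x}\sum_{pq>z}\tfrac{1}{pq\,\varphi(pq)}$, which the rapid decay of the summand keeps below the error term, while the thin sliver $pq\in(x^{1/2-\delta},x^{1/2}]$ is absorbed by bounding directly the few integers of the progression. The secondary term from the expansion of $\li$, together with the accumulation of the Brun--Titchmarsh bound over the dyadic blocks of the modulus, yields the stated $O\bigl(x\log\log x/(\log x)^2\bigr)$.

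The principal obstacle is uniformity in the modulus: $pq$ ranges up to about $x^{1/2}$, far beyond the Siegel--Walfisz window, so the argument must be organised so that the true main term is created solely by the moduli $pq\le z$, where an unconditional asymptotic is available, and every larger modulus feeds only the error. The delicate points are therefore the clean matching at the cutoff $z$---showing that completing the convergent density series to infinity costs no more than the admitted error---and the transition region $pq\asymp x^{1/2}$, where Brun--Titchmarsh degenerates and one must revert to counting integers, rather than primes, in the progression. If one preferred to average over the modulus instead, the Bombieri--Vinogradov theorem would control the intermediate range, but since the main term is supplied entirely by the small moduli this is not needed.
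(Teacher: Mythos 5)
Your proposal is, in all essentials, the paper's own argument: isolate the largest prime $r$ as a prime in the progression $a \bmod pq$, extract the main term from Siegel--Walfisz in the range where $pq$ is polylogarithmic in $x$, dispose of every larger modulus by forgetting that $r$ is prime (the paper does this by trivially counting integers in the progression and cutting at $\max(p,q)<(\log x)^2$, where you invoke Brun--Titchmarsh with cutoff $pq\le(\log x)^B$---a cosmetic difference), and finally complete the density series $\sum 1/(pq\,\varphi(pq))$ to obtain the constant. Your explicit handling of pairs with $\gcd(a,pq)>1$ is a point the paper passes over in silence, and your closing claim that the completed double series \emph{factors as the square} $\bigl(\sum_p 1/(p(p-1))\bigr)^2$ is precisely the paper's own final completion step; be aware, though, that in both write-ups the sum genuinely runs only over odd primes $p<q$ coprime to $a$, so the identification of its completion with the unrestricted square $C_2$ is a leap that neither your text nor the paper justifies.
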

Here the convergence of the prime sum is much poorer. However, it is 
easily related to zeta values at integer arguments, see \cite[p. 230]{gaps}, 
and in this
way one obtains
$$\sum_p \frac{1}{p(p-1)} = \sum_{k=1}^{\infty}\frac{(\varphi(k)-\mu(k))}{k}\log \zeta(k)
=0.77315666904975\ldots.$$
\par Theorem~\ref{thm::mod_pq} allows one to deduce asymptotic results on the flatness
of ternary cyclotomic polynomials, see 
Theorem~\ref{thm:flatness} in Section~\ref{subsection7.4.2}.
\begin{Thm}
\label{main}
For every odd prime $p\ge 3$ let
$$
M(p)=\{(a_i(p),b_i(p))~:~1\le a_i(p),b_i(p)\le p-1\} 
$$
be a set of mutually distinct pairs $(a_i(p),b_i(p))$
of cardinality
$$|M(p)|=\alpha p^2+O(p),\qquad \text{ as } \;\; p\rightarrow \infty,$$ with 
$0<\alpha<1$.
Put 
$$
{\mathcal T}_M=\{pqr~:~ 3 \leq p<q<r,~(q,r)\equiv (a_i(p),b_i(p))~(\mmod p),~1\le i\le |M(p)|\},
$$
where $p,q$ and $r$ are distinct odd primes.
Then 
\begin{equation*}
{\mathcal T}_M(x)=\frac{\alpha 
x(\log \log x)^2}{2\log x} \left(1+O\left(\frac{1}{\log \log \log x}\right)\right).
\end{equation*}
\end{Thm}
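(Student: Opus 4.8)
The plan is to fix the smallest prime $p$ and to exploit that the congruence condition involves only residues modulo $p$. Setting $y=x/p$ and letting $q,r$ run over primes, one has
$$\mathcal{T}_M(x)=\sum_{3\le p}\ \sum_{(a,b)\in M(p)} N_{a,b}(p),\qquad N_{a,b}(p)=\#\bigl\{(q,r):\ p<q<r,\ qr\le y,\ q\equiv a,\ r\equiv b\ (\mmod p)\bigr\}.$$
Summing $N_{a,b}(p)$ over all pairs $(a,b)$ with $1\le a,b\le p-1$ recovers the unconstrained pair count $B_p(y)=\#\{(q,r):p<q<r,\ qr\le y\}$, and $\sum_p B_p(x/p)=N_T(x)$ is exactly the ternary count governed by \eqref{moreprecise}. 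Since $|M(p)|=\alpha p^2+O(p)$ while the total number of admissible residue pairs is $(p-1)^2=p^2(1+O(1/p))$, the goal is to show that restricting to $M(p)$ multiplies each $B_p(y)$ by the density $|M(p)|/(p-1)^2=\alpha+O(1/p)$, after which summation over $p$ and \eqref{moreprecise} deliver $\alpha x(\log\log x)^2/(2\log x)$.

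The analytic content is an equidistribution statement: the prime pairs $(q,r)\le y$ are spread evenly over the residue classes $(q\bmod p,\,r\bmod p)$, so $N_{a,b}(p)$ is close to the average $B_p(y)/(p-1)^2$. Because $M(p)$ is an arbitrary set of the prescribed size, no cancellation is available, and one needs the full $L^1$-deviation
$$\Delta(p)=\sum_{a,b}\Bigl|N_{a,b}(p)-\frac{B_p(y)}{(p-1)^2}\Bigr|,$$
since $\Delta(p)$ bounds $\bigl|\sum_{(a,b)\in M(p)}N_{a,b}(p)-\tfrac{|M(p)|}{(p-1)^2}B_p(y)\bigr|$ uniformly in the choice of $M(p)$. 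To estimate $\Delta(p)$ I would decouple the pair count by fixing the middle prime $q$ and counting the largest prime $r$: for each $q\equiv a\ (\mmod p)$ the number of primes $r\in(q,y/q]$ in a fixed class $b\ (\mmod p)$ is controlled by equidistribution of $r$ modulo $p$, while the outer restriction $q\equiv a$ is controlled by equidistribution of $q$ modulo $p$. Both reduce to standard deviations of the type $\psi(Y;p,c)-\psi(Y)/\varphi(p)$ summed over residue classes $c$ and over $p$.

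The main obstacle is the size of the modulus $p$. A short Mertens computation shows that the factor $(\log\log x)^2$ in \eqref{moreprecise} is produced by $p$ ranging essentially up to $x^{1/3}$: truncating at $p\le(\log x)^A$ retains only a proportion $O(\log\log\log x/\log\log x)$ of the main term. Hence one cannot confine $p$ to the Siegel--Walfisz range, and the equidistribution of $q$ and $r$ modulo $p$ must be controlled for $p$ as large as a fixed power of $x$, where individual equidistribution is unavailable. The natural remedy is to use it only on average over the moduli, via the Barban--Davenport--Halberstam theorem (or the large sieve), passing from the resulting $L^2$-bound to the $L^1$-bound $\sum_p\Delta(p)$ by Cauchy--Schwarz. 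Splitting $p$ at a threshold $z$, treating $p\le z$ by Siegel--Walfisz and $z<p\le x^{1/3}$ by the averaged estimate, and optimising $z$ so that both the averaged error and the discarded small-$p$ contribution stay below the $(\log\log x)^2$ main term is precisely the balancing that forces the weak error $O(1/\log\log\log x)$; keeping this averaged error genuinely of smaller order across the whole range of $p$ is the step I expect to be the most delicate.

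Finally, once $\sum_{(a,b)\in M(p)}N_{a,b}(p)$ has been replaced by $(\alpha+O(1/p))\,B_p(x/p)$, summing over $p$ gives $\alpha\sum_pB_p(x/p)+O\bigl(\sum_p p^{-1}B_p(x/p)\bigr)$. The arithmetic error $\sum_p p^{-1}B_p(x/p)\ll (x\log\log x/\log x)\sum_p p^{-2}$ is $O(x\log\log x/\log x)$, smaller by a factor $\log\log x$ than the main term and dominated by the equidistribution error above; inserting \eqref{moreprecise} then yields the asserted asymptotic $\mathcal{T}_M(x)=\tfrac{\alpha x(\log\log x)^2}{2\log x}\bigl(1+O(1/\log\log\log x)\bigr)$.
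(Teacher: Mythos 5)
Your proposal is correct in outline, but it proceeds by a genuinely different route than the paper. The paper never attempts equidistribution for all $p\le x^{1/3}$: it first discards the ranges $q\ge y$, $q<p^{\log_2 x}$, $p\le z_1$ and $p\ge y_1$, showing that each contributes $O(N_T(x)\log_3 x/\log_2 x)$ or $O(N_T(x)/\log_3 x)$; on the surviving range the modulus $p$ lies below the tiny power $t^{1/\log_2 x}$ of the scale $t$ of the variables $q,r$, and there Lemma \ref{lem:prime} (a Linnik-type count of Landau--Page exceptional zeros, following Bombieri) gives \emph{individual} equidistribution $\pi(t;p,a)=\frac{\pi(t)}{p-1}\bigl(1+O((\log t)^{-A})\bigr)$ for all but at most $2\log_2 x$ exceptional primes, which are then handled trivially; the main term is finally reassembled from Mertens-type sums $\sum 1/p$ and $\sum(\log_2 p)/p$. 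You instead keep every $p\le x^{1/3}$, majorize the effect of the arbitrary set $M(p)$ by the full $L^1$-discrepancy $\Delta(p)$, control $\sum_p\Delta(p)$ \emph{on average over moduli} via Barban--Davenport--Halberstam plus Cauchy--Schwarz, and then get the main term for free as $\alpha N_T(x)$ from \eqref{moreprecise}. Your diagnosis of the crux matches the paper's (Siegel--Walfisz alone captures only a fraction $O(\log_3 x/\log_2 x)$ of the main term), but your remedy is different, and it does have the required strength: for $p\sim P$ and fixed $q$, Cauchy--Schwarz over the $\asymp P^2/\log P$ pairs $(p,b)$ and the variance bound give $\sum_{p\sim P}\sum_{b}\bigl|\pi(Y;p,b)-\tfrac{\pi(Y)}{p-1}\bigr|\ll P^{3/2}(Y/\log Y)^{1/2}(\log P)^{-1/2}+PY(\log Y)^{-A/2}$ at scale $Y=x/(Pq)\ge x^{1/3}$, and summing over $q\le\sqrt{x/P}$ and over dyadic $P\le x^{1/3}$ yields $O(x/(\log x)^2)$, far below the needed threshold $x(\log_2 x)^2/(\log x\log_3 x)$. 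A genuine advantage of your route is that it never throws away the primes $p\le z_1$, whose removal is precisely what produces the paper's $1/\log_3 x$; carried out, your argument gives the stronger relative error $O(1/\log_2 x)$.

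Two gaps in your sketch must be filled before it is a proof. First, BDH is trivial when the modulus is comparable to the range of the variable, so the blanket claim that $\Delta(p)$ is controlled by averaged equidistribution is false as stated: it fails for the portion of the count with $q\le p(\log x)^{O(1)}$ (and in the corner where $p$ and $q$ are both near $x^{1/3}$). These portions cannot be treated by equidistribution at all, individually or on average; they must be split off and bounded trivially by the number of triples they contain, which is $O(x(\log_3 x)^2/\log x)$, respectively $O(x(\log_2 x)^2/(\log x)^3)$, hence harmless. This is the same phenomenon that forces the paper's restriction $q>p^{\log_2 x}$, and some such splitting is unavoidable in your argument too. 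Second, BDH is a fixed-scale statement, while your inner application needs the scale $Y=x/(pq)$, which varies with $p$ inside each dyadic block; you need either a maximal variant of BDH or a finer-than-dyadic partition of the $p$-range, re-anchoring $Y$ on each subblock and balancing the re-anchoring error against the Cauchy--Schwarz loss (this still lands comfortably at $O(x/(\log x)^{4/3})$, say). With these two repairs, your plan closes, and it is arguably more robust than the paper's, relying on an off-the-shelf variance estimate rather than on the exceptional-zero machinery behind Lemma \ref{lem:prime}.
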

Finally, Theorem~\ref{main} can 
be used to provide further evidence 
of the truth of the corrected Sister Beiter conjecture, see Theorem~\ref{cycchal} in Section~\ref{subsection7.4.3}.
\section{Auxiliary results}
For a positive integer $k$ and a positive real number $x$ we write $\log_k x$ for 
the iteratively defined function given by $\log_1 x=\max\{1,\log x\}$, where $\log x$ is a natural logarithm 
of $x$,  and for $k\ge 2$, $\log_k x=\max\{1,\log_{k-1} x\}$.\\
\indent We first briefly recall some 
standard tools.
\par Chebychev showed that
\begin{equation}
\label{Cestimates}
\pi(x)\asymp \frac{x}{\log x}.
\end{equation}
Since the times of Chebychev our understanding of $\pi(x)$ has
much improved:
\begin{Thm}[Prime Number Theorem in strongest 
form]
\label{PNT}
There exists $c>0$ such that
\[
\pi(x)=\li(x)+O\left(x e^{-c
(\log x)^{3/5}(\log \log x)^{-1/5}}\right),
\]
where $\li(x)$ denotes the logarithmic integral
\[
\li(x)=\int_2^x \frac{dt}{\log t}.
\]
\end{Thm}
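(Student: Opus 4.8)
The plan is to deduce the stated estimate for $\pi(x)$ from the corresponding estimate for the Chebyshev function $\psi(x)=\sum_{n\le x}\Lambda(n)$, where $\Lambda$ is the von Mangoldt function, and then to pass to $\pi(x)$ by partial summation. The natural starting point is the truncated Perron formula, which gives
$$\psi(x)=\frac{1}{2\pi i}\int_{\kappa-iT}^{\kappa+iT}\left(-\frac{\zeta'}{\zeta}(s)\right)\frac{x^s}{s}\,ds+O\left(\frac{x(\log x)^2}{T}\right),$$
with $\kappa=1+1/\log x$ and a free parameter $T$ to be chosen at the end. The main term will arise from the simple pole of $-\zeta'/\zeta$ at $s=1$, whose residue contributes exactly $x$ to $\psi(x)$; the entire difficulty lies in pushing the contour to the left of the line $\Re(s)=1$ and estimating the resulting integral.

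The crucial analytic input is the Vinogradov--Korobov zero-free region
$$\zeta(\sigma+it)\ne 0 \qquad\text{for}\qquad \sigma\ge 1-\frac{c_0}{(\log|t|)^{2/3}(\log\log|t|)^{1/3}},\quad |t|\ge 3,$$
together with the accompanying bound $\zeta'/\zeta(s)\ll(\log|t|)^{2/3}(\log\log|t|)^{1/3}$ valid just inside this region. The zero-free region is itself derived from Vinogradov's estimates for the exponential sums $\sum_{n\le N}n^{-it}$ --- equivalently, from the Vinogradov mean value theorem --- which yield subconvexity-type upper bounds for $|\zeta(\sigma+it)|$ in a thin strip to the left of $\sigma=1$; a Borel--Carath\'eodory estimate combined with a Hadamard three-circles argument then converts these upper bounds into the stated region free of zeros.

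I would then shift the contour to the vertical segment $\Re(s)=1-\eta(T)$, where $\eta(T)=c_0/\bigl((\log T)^{2/3}(\log\log T)^{1/3}\bigr)$, picking up the residue $x$ and bounding the shifted vertical and the two horizontal pieces by the $\zeta'/\zeta$ estimate. The shifted integral is $\ll x^{1-\eta(T)}(\log T)^{O(1)}$, while the Perron truncation error is $\ll x(\log x)^2/T$. Choosing $\log T$ of order $(\log x)^{3/5}(\log\log x)^{-1/5}$ balances the two contributions, and a short computation gives $\eta(T)\log x\asymp(\log x)^{3/5}(\log\log x)^{-1/5}$, so that both errors are $O\bigl(x\exp(-c(\log x)^{3/5}(\log\log x)^{-1/5})\bigr)$. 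This establishes the desired estimate for $\psi(x)$.

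Finally, I would transfer to $\pi(x)$. Since $\psi(x)-\theta(x)=O(\sqrt{x}(\log x)^2)$, the estimate for $\psi$ carries over with the same error to $\theta(x)=\sum_{p\le x}\log p$, and partial summation applied to $\pi(x)=\theta(x)/\log x+\int_2^x\theta(t)/(t(\log t)^2)\,dt$ replaces the main term $x$ by $\li(x)$ while preserving the error term after adjusting the constant $c$. The main obstacle is unquestionably the zero-free region: every other step is standard complex-analytic bookkeeping, whereas the $(\log|t|)^{2/3}$-type boundary requires the full strength of Vinogradov's method, and this is the sole place where the sharp exponents $3/5$ and $-1/5$ in the final error term originate.
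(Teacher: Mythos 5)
The paper does not prove Theorem~\ref{PNT} at all: it quotes it as a known result, citing Ford for the error term (obtained via the Vinogradov--Korobov zero-free region) and Trudgian for the admissible constant $c=0.2098$. Your sketch is precisely the standard argument behind that citation---truncated Perron formula, contour shift into the Vinogradov--Korobov region of width $\eta(T)\asymp (\log T)^{-2/3}(\log\log T)^{-1/3}$, the balance $\log T\asymp(\log x)^{3/5}(\log\log x)^{-1/5}$ giving $\eta(T)\log x\asymp(\log x)^{3/5}(\log\log x)^{-1/5}$, then partial summation from $\psi$ to $\theta$ to $\pi$---and the exponent bookkeeping checks out, so this is the correct proof taking essentially the same approach as the source the paper relies on.
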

The error term above was established in \cite{ford} using the strongest available version of the zero-free region for $\zeta$-function due to Vinogradov and Korobov. It was shown by Trudgian \cite{trudgian} that one can take $c=0.2098$. 

\begin{Thm}[Mertens]
\label{mertens}
We have
$$
\sum_{p\le x} \frac{1}{p}=\log \log x+c_0+O\left(\frac{1}{\log x}\right),
$$
valid for all $x\ge 3$ with some constant $c_0$.
\end{Thm}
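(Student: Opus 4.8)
The plan is to derive this classical estimate from Chebyshev's bound \eqref{Cestimates} alone, in two steps: first proving Mertens' first theorem
$$\sum_{p\le x}\frac{\log p}{p}=\log x+O(1),$$
and then converting it into the stated sum by partial summation. In particular the full Prime Number Theorem (Theorem \ref{PNT}) is not required here; had we used it, the error would improve far beyond $O(1/\log x)$, but the weaker bound already suffices for the applications and is what the statement records. Throughout I write $\vartheta(x)=\sum_{p\le x}\log p$, so that $\vartheta(x)=O(x)$ is a consequence of the upper bound in \eqref{Cestimates}.

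For the first step I would start from Legendre's formula for the factorial,
$$\log\lfloor x\rfloor!=\sum_{p^k\le x}\Big\lfloor\frac{x}{p^k}\Big\rfloor\log p,$$
and compare it with Stirling's estimate $\log\lfloor x\rfloor!=x\log x-x+O(\log x)$. The terms with $k\ge 2$ contribute only $O(x)$, since $\sum_p \log p/(p(p-1))$ converges, and replacing each $\lfloor x/p\rfloor$ by $x/p$ introduces an error of size $O(\vartheta(x))=O(x)$ by \eqref{Cestimates}. Dividing the resulting identity $x\sum_{p\le x}\frac{\log p}{p}=x\log x+O(x)$ by $x$ then yields Mertens' first theorem.

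For the second step, set $A(t)=\sum_{p\le t}\frac{\log p}{p}=\log t+R(t)$ with $R(t)=O(1)$, and apply Abel summation to $\sum_{p\le x}\frac1p=\sum_{p\le x}\frac{1}{\log p}\cdot\frac{\log p}{p}$ with the smooth weight $t\mapsto 1/\log t$. This gives
$$\sum_{p\le x}\frac{1}{p}=\frac{A(x)}{\log x}+\int_2^x\frac{A(t)}{t(\log t)^2}\,dt.$$
Substituting $A(t)=\log t+R(t)$, the boundary term equals $1+O(1/\log x)$, the integral $\int_2^x\frac{dt}{t\log t}=\log\log x-\log\log 2$ supplies the main term, and the remaining piece $\int_2^x\frac{R(t)}{t(\log t)^2}\,dt$ converges as $x\to\infty$ because $R(t)=O(1)$ and $\int^\infty\frac{dt}{t(\log t)^2}<\infty$; its tail beyond $x$ is $O(1/\log x)$. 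Absorbing the constants $1-\log\log 2$ and the limiting integral into a single $c_0$ then gives the claimed formula.

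The main obstacle is the first step. Obtaining $\sum_{p\le x}\frac{\log p}{p}=\log x+O(1)$ requires bounding both the proper prime powers and the floor-function discrepancies purely in terms of Chebyshev's estimate, without circularly appealing to the finer asymptotics one is after; the cancellation that produces the clean main term $\log x$ comes from Stirling's formula rather than from any prime-counting input. Once that identity is established, the passage to $\sum_{p\le x}1/p$ is routine partial summation, the only care needed being to confirm that the constant integral converges and that its tail together with the boundary term is genuinely of size $O(1/\log x)$.
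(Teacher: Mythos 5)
Your proposal is correct in every step: the Legendre--Stirling argument does yield $\sum_{p\le x}\log p/p=\log x+O(1)$ using only Chebyshev-strength input (the $k\ge 2$ prime-power terms are $O(x)$ via the convergent sum $\sum_p \log p/(p(p-1))$, and the floor-function error is $O(\vartheta(x))=O(x)$), and the Abel summation step correctly produces the main term $\log\log x$ from $\int_2^x dt/(t\log t)$, with the boundary term $A(x)/\log x=1+O(1/\log x)$ and the tail $\int_x^\infty R(t)\,dt/(t(\log t)^2)=O(1/\log x)$ absorbed as claimed, so the error term genuinely comes out at the stated strength $O(1/\log x)$. There is nothing to compare against here: the paper states Theorem \ref{mertens} as a standard auxiliary tool and gives no proof of it, so your argument, which is the classical elementary proof, fills that gap correctly and with no more machinery than the paper itself assumes (in particular you are right that Theorem \ref{PNT} is not needed).
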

\begin{Thm}[Siegel-Walfisz] 
\label{SW}
Given any $A>0$, there exists a constant $c_1(A)$ such that if $d\le \log^A x$, then 
$$ \pi(x;d,a)=\frac{{\rm Li}(x)}{\varphi(d)}+O(xe^{-c_1(A)\sqrt{\log x}}),$$ where
$\pi(x;d,a)=|\{p\le x:p\equiv a(\mmod{d})\}|$.
\end{Thm}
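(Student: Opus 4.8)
The plan is to run the classical analytic argument: weight primes by the von Mangoldt function, separate residue classes with Dirichlet characters, and reduce the theorem to bounds on the twisted sums $\psi(x,\chi)=\sum_{n\le x}\Lambda(n)\chi(n)$. Assuming $(a,d)=1$ (otherwise the statement is essentially vacuous, only the $O(\log x)$ prime powers dividing $d$ contributing), orthogonality of the characters modulo $d$ gives
$$\psi(x;d,a):=\sum_{\substack{n\le x\\ n\equiv a\,(\mmod d)}}\Lambda(n)=\frac{1}{\varphi(d)}\sum_{\chi\,(\mmod d)}\bar\chi(a)\,\psi(x,\chi).$$
The principal character contributes $\psi(x,\chi_0)=x+O(xe^{-c\sqrt{\log x}})$ by the Prime Number Theorem (the cruder $\exp(-c\sqrt{\log x})$ form of Theorem~\ref{PNT} suffices), producing the main term $x/\varphi(d)$; partial summation then converts $\psi(x;d,a)$ into $\pi(x;d,a)$ and replaces $x/\varphi(d)$ by $\li(x)/\varphi(d)$, while the prime-power terms $\sum_{p^k\le x,\,k\ge 2}$ are absorbed into the error. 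So everything rests on bounding $\psi(x,\chi)$ for the $\varphi(d)-1$ non-principal characters.

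For each non-principal $\chi\,(\mmod d)$ I would apply the explicit formula obtained by contour integration of $-\tfrac{L'}{L}(s,\chi)\,x^s/s$, giving
$$\psi(x,\chi)=-\sum_{|\gamma|\le T}\frac{x^\rho}{\rho}+O\!\left(\frac{x(\log(dx))^2}{T}\right),$$
the sum running over nontrivial zeros $\rho=\beta+i\gamma$ of $L(s,\chi)$. The essential input is the classical Gronwall--Landau--Page zero-free region: there is an absolute $c>0$ such that $L(s,\chi)\ne 0$ for $\sigma>1-c/\log(d(|t|+2))$, with the sole possible exception of one simple real zero $\beta_1$ belonging to a single real character $\chi_1\,(\mmod d)$. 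Feeding this region, together with the standard bound $N(T,\chi)\ll T\log(dT)$ for the count of zeros up to height $T$, into the explicit formula and taking $T=\exp(\sqrt{\log x})$, the contribution of all non-exceptional zeros is $\ll x\exp(-c'\sqrt{\log x})$ uniformly for $d\le(\log x)^A$.

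The crux is the exceptional (Siegel) zero $\beta_1$, whose contribution $-\chi_1(a)\,x^{\beta_1}/\beta_1$ is not controlled by the zero-free region alone when $\beta_1$ lies extremely close to $1$. Here I would invoke \textbf{Siegel's theorem}: for every $\epsilon>0$ there is an (ineffective) constant $C(\epsilon)>0$ with $1-\beta_1>C(\epsilon)\,d^{-\epsilon}$. Since $d\le(\log x)^A$, choosing $\epsilon=1/(2A)$ gives $(1-\beta_1)\log x>C(\epsilon)(\log x)^{\,1-A\epsilon}=C(\epsilon)\sqrt{\log x}$, whence
$$\frac{x^{\beta_1}}{\beta_1}\ll x\exp\!\big(-(1-\beta_1)\log x\big)\ll x\exp\!\big(-C(\epsilon)\sqrt{\log x}\big).$$
Combining the two estimates yields $\psi(x,\chi)\ll x\exp(-c_1(A)\sqrt{\log x})$ for every non-principal $\chi$; summing over the $\varphi(d)\le d\le(\log x)^A$ characters costs only a polylogarithmic factor that is absorbed into the exponential, giving the bound for $\psi(x;d,a)$, and partial summation delivers the stated estimate for $\pi(x;d,a)$. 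The main obstacle is precisely this exceptional-zero step: Siegel's theorem is the only tool strong enough to push $\beta_1$ away from $1$ across the full range $d\le(\log x)^A$, and it is responsible both for the clean final error term and for the ineffectivity of the constant $c_1(A)$.
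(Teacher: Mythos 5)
The paper does not prove this statement at all: Theorem~\ref{SW} is quoted as a classical black-box tool (alongside Theorems~\ref{PNT} and~\ref{mertens}), so there is no in-paper argument to compare against. Your proposal is the standard and correct proof of Siegel--Walfisz as found, e.g., in Davenport: orthogonality of characters, the explicit formula truncated at $T=\exp(\sqrt{\log x})$, the Landau--Page zero-free region with at most one exceptional real zero, and Siegel's bound $1-\beta_1\gg_\epsilon d^{-\epsilon}$ with $\epsilon=1/(2A)$ to force $x^{\beta_1}\ll x\exp(-C\sqrt{\log x})$ in the range $d\le(\log x)^A$ --- including the correct observation that this makes $c_1(A)$ ineffective, and the correct implicit restriction to $(a,d)=1$ (for $(a,d)>1$ the stated asymptotic would in fact be false, since the main term dominates the error term while $\pi(x;d,a)=O(\log x)$). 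One cosmetic point: because the character decomposition carries the prefactor $1/\varphi(d)$, summing the $\varphi(d)$ nonprincipal characters costs nothing at all, not even the polylogarithmic factor you allow for; this does not affect the result.
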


\begin{lem}
\label{lem:prime}
Put $y:=\exp(\log x/\log_2 x)$ and $z_1:=\exp((\log x)^{1/\log_3 x})$. Then there 
exists a positive constant $A$ such that if $z_1<p$ and $p^{\log_2 x}<t\le y$, then
\begin{equation}
\label{eq:prime}
\pi(t;p,a)=\frac{\pi(t)}{p-1} \left(1+O\left(\frac{1}{(\log t)^A}\right)\right)
\end{equation}
holds for all residue classes $a\in \{1,\ldots,p-1\}$  and all $t$ except for at most $2\log_2 x$  exceptional primes $p$ each of which exceeds $\log_2 x$. 
\end{lem}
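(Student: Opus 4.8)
The plan is to pass from primes to the Chebyshev weight, apply the truncated explicit formula, and control the non-trivial zeros of the Dirichlet $L$-functions modulo $p$ through the classical zero-free region, isolating the possible Siegel zeros as the only genuine obstruction.

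First I would replace $\pi(t;p,a)$ by $\psi(t;p,a)=\sum_{n\le t,\; n\equiv a\,(p)}\Lambda(n)$; since $\gcd(a,p)=1$ for $a\in\{1,\dots,p-1\}$, a standard partial summation transfers any power-of-$\log$ saving for $\psi(t;p,a)/(\psi(t)/(p-1))$ to the corresponding statement for $\pi(t;p,a)/(\pi(t)/(p-1))$, while the prime-power remainder $\psi-\theta$ is negligible. Expanding into Dirichlet characters and using the truncated explicit formula, I would write
$$\psi(t;p,a)=\frac{\psi(t)}{p-1}-\frac{1}{p-1}\sum_{\chi\ne\chi_0}\bar\chi(a)\sum_{|\gamma|\le T_0}\frac{t^{\rho}}{\rho}+(\text{small}),$$
taking $T_0=\exp((\log t)^{2/3})$ so that the truncation error is super-polynomially small in $\log t$. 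Thus everything reduces to bounding the contribution of the zeros $\rho=\beta+i\gamma$.

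Here I would invoke Landau's zero-free region: for $\chi$ modulo $p$ every zero lies in $\beta<1-c/\log(p(|\gamma|+2))$, save possibly one simple real zero $\beta_p$ of one real character. As $p$ is prime, the only real non-principal character modulo $p$ is the Legendre symbol, so each prime carries at most one such zero. For all other zeros the hypothesis $t>p^{\log_2 x}$ is decisive, giving $\log t/\log p>\log_2 x$ and hence $t^{\beta}\ll t\exp(-c\log t/\log p)\le t(\log x)^{-c}$ for the low zeros; a dyadic split in $|\gamma|$ with the Riemann--von Mangoldt zero-count handles the higher ones. Since $\log t\le\log x$, this is a saving of the required form $(\log t)^{-A}$ with $A=c$.

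It remains to treat and count the primes carrying a Siegel zero. I call $p$ exceptional if the Legendre symbol modulo $p$ has a real zero $\beta_p>1-c_0/\log p$ with $c_0$ small; for a non-exceptional $p$ the potential real zero obeys $\beta_p\le 1-c_0/\log p$ and is reabsorbed into the same $(\log x)^{-c_0}$ saving. The main obstacle is bounding the number of exceptional primes in $(z_1,y^{1/\log_2 x}]$. For this I would use Landau's repulsion inequality $\min(\beta_1,\beta_2)<1-c_1/\log(q_1q_2)$ for distinct real primitive characters: choosing $c_0=c_1/3$ forces $\log p_{i+1}>2\log p_i$ for consecutive exceptional primes $p_i<p_{i+1}$, so their logarithms grow geometrically. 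Comparing the largest such prime, with $\log p_m\le\log x/(\log_2 x)^2$, against $\log z_1=(\log x)^{1/\log_3 x}$ yields $2^{m-1}<\log x$, i.e.\ fewer than $2\log_2 x$ exceptional primes, each exceeding $z_1>\log_2 x$. Combining the two cases gives the asserted estimate, uniformly in $a$ and in $t$, away from this fixed exceptional set.
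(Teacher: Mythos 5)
Your architecture (truncated explicit formula, classical zero-free region, Landau repulsion for Siegel zeros) cannot prove this lemma, because every error term must be measured against the main term $t/\varphi(p)$, not against $t$, and in the lemma's range the modulus is enormous: $p>z_1=\exp((\log x)^{1/\log_3 x})$, and $p$ can be as large as $y^{1/\log_2 x}$, i.e.\ essentially $t^{1/\log_2 x}$. Concretely, after dividing by $t/\varphi(p)$, the quantity you must bound is $\frac{1}{t}\sum_{\chi\ne\chi_0}\sum_{|\gamma|\le T_0}t^{\beta}/|\rho|$, and the classical region only yields the per-zero saving $t^{\beta-1}\le\exp\left(-c\log t/\log(pT_0)\right)$. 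Since the non-principal characters mod $p$ together have $\gg\varphi(p)\log p$ zeros with $|\gamma|\le 1$ alone, your method gives a relative error of size roughly $\varphi(p)\log p\cdot\exp\left(-c\log t/\log p\right)$, which is $\le(\log t)^{-A}$ only when $c\log t/\log p\gtrsim\log p$, i.e.\ essentially when $\log t\gg(\log p)^2$. The hypothesis of the lemma only provides $\log t>(\log_2 x)\log p$, and since $\log p>\log z_1=\exp(\log_2 x/\log_3 x)\gg\log_2 x$, we have $(\log p)^2\gg(\log_2 x)\log p$; so your bound fails throughout the bottom of the $t$-range (for every $p>z_1$ with $t$ near $p^{\log_2 x}$), which is exactly where the lemma is needed. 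Your step ``$t^{\beta}\ll t(\log x)^{-c}$, hence a saving of the required form'' silently compares against $t$ rather than $t/\varphi(p)$. The same oversight affects the truncation: with $T_0=\exp((\log t)^{2/3})$ the error $t\log^2(pt)/T_0$ is small relative to $t$, but relative to $t/\varphi(p)$ it carries the factor $\varphi(p)/T_0$, which can be as large as $\exp\left(\log t/\log_2 x-(\log t)^{2/3}\right)$ and blows up because $(\log t)^{1/3}\gg\log_2 x$ in this range.

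This is not a repairable detail: moduli beyond $\exp(c\sqrt{\log t})$ are precisely Linnik's-theorem territory, where the classical zero-free region combined with the Riemann--von Mangoldt count is powerless and one needs log-free zero-density estimates together with the Deuring--Heilbronn repulsion. That is what the paper does: it follows Bombieri's account of Linnik's theorem \cite{Bom}, whose key estimate bounds the full zero sum $F$ over all characters mod $p$ by $t^{1/2}T^5+(\log t)\,t^{1-c_1/\log T}/\log(t/T^{c_3})$ with no factor of $\varphi(p)$ lost; the choice $T=t^{2/\log_2 x}$ then converts $t^{-c_1/\log T}$ into the required $(\log t)^{-A}$ saving, and the exceptional primes are counted by a doubling argument in $T$ (an exceptional zero for the pair $(T,c_1)$ remains exceptional for all $T'\in[T,T^2]$), giving at most $2\log_2 x$ of them, each exceeding $\log_2 x$. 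For what it is worth, your count of Siegel-zero primes via Landau's repulsion (geometric growth of exceptional moduli between $z_1$ and $y^{1/\log_2 x}$) is a legitimate alternative to that last step and gives the same $O(\log_2 x)$ bound, but it does not rescue the main estimate.
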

\noindent {\bf Remark.} Observe that since $t>z_1$, it follows that $(\log t)^A>\log_2 x$ holds for all $x$ sufficiently large. Thus, we may assume that also the error in the estimate of the above lemma (uniformly in our range for $t$), is larger than $\log_2 x$.

\begin{proof}
We follow the proof of Linnik's theorem from  page 54 in \cite{Bom}. Let $p\in (z_1,y^{1/\log_2 x})$ be fixed 
and let $t>p^{\log_2 x}$. There it is shown that if $p\le T$ is any modulus then
$$
\sum_{\substack{q\le t\\ q\equiv a(\mmod p)}} \log q=\frac{t}{\phi(p)}+E+O\left(t^{1/2}+\frac{t\log t}{T}\right),
$$
and
$$
E=-\chi_1(a)\frac{t^{\beta_1}}{\beta_1}+O\left(\frac{F}{\phi(p)} \right),
$$
with
both $E$ and $F$ 
certain sums over zeros of 
$L$ functions $L(s,\chi)$, 
where $\chi$ 
runs  over the characters modulo $p$.
The term $-\chi_1(a) t^{\beta_1}/\beta_1$ appears only if there exists an exceptional zero relative to the pair $(T, c_1)$. For us, we put $T:=t^{2/\log_2 x}$ and take any $c_1$. Then $p\le T^{1/2}$.
If there is an exceptional zero with respect to the pair  $(T,c_1)$, then it is unique. Further, it is also exceptional for the pair  $(T',c_1/2)$ for any $T' \in [T,T^2]$, and 
the prime $p$ satisfies
$$
p>(\log(T^{1/2}))^{c_2}=(\log T)^{c_2/2}.
$$ 
Since $p>z_1$, we have that $t>z_1^{\log_2 x}$, so 
$$
\log t>(\log_2 x)z_1=(\log _2 x) (\log x)^{1/\log_3 x}>(\log_2 x)^2\qquad {\text{\rm for}}\qquad x>x_0.
$$
Hence,  
$$
\log T=\frac{2\log t}{\log_2 x}>(\log t)^{1/2}
$$
uniformly for all our $t$ when $x>x_0$, so $p>(\log T)^{c_2/2}>(\log t)^{c_2/4}$. Note that since $t>p^{\log_2 x}>z_1^{\log_2 x}$, it follows easily that 
$$
(\log t)^{c_2/2}>((\log_2 x) (\log x)^{1/\log_3 x})^{c_2/2}>\log_2 x
$$
for all $x>x(c_1)$. Let us give an upper bound on the number $k$ of exceptional primes of this type. Since we just said that if there is some exceptional prime for $T$, then it is also the exceptional prime for all $T'\in [T,T^2]$, it follows that if we take 
$t_1:=z_1^{\log_2 x}$, $t_2:=t_1^2,~t_3:=t_2^2,~\ldots,~t_k:=t_{k-1}^2$, where $k$ is the smallest positive integer such that $t_k\ge y $, then there can be at most $k$ exceptional primes altogether. Clearly, from the above recurrence we have $t_j=t_1^{2^j}$. Hence,
$$
y\le t_1^{2^k}=(z_1^{2\log_2 x})^{2^k},
$$
and upon taking logarithms we get
$$
\frac{\log x}{\log_2 x}\le 2^k (\log_2 x) (\log x)^{1/\log_3 x},
$$
and taking logarithms once again we get
$$
k\log 2-\frac{\log_2 x}{\log_3 x}\ge \log_2 x-2\log_3 x.
$$
Hence, 
$$
k=\left(\frac{1}{\log 2}+O\left(\frac{1}{\log_3 x}\right)\right)\log_2 x,
$$
so clearly, $k<2\log_2 x$ for all $x$ large enough. From now on, we discard the exceptional primes and work with the remaining ones. For them, 
$$
E=O\left(\frac{F}{\phi(p)}\right),
$$
where by arguments from the middle of page 55 in \cite{Bom} together with the fact that we are under the assumption that there is no exceptional zero,  $F$ is bounded as
$$
F\ll t^{1/2} T^5+\frac{(\log t)t^{1-c_1/\log T}}{\log(t/T^{c_3})}\quad {\text{\rm if}}\quad  t>T^{c_3}.
$$
For us, the inequality $t>T^{2c_3}$ holds for all $x>x_0$, so $\log(t/T^{c_3})\gg \log t$. Further, since in fact $\log T\le 2\log t/\log\log x\le 2\log t/\log\log t$, it follows that $1-c_1/\log T\ge 1-2c_1(\log\log t)/\log t$, therefore 
the second term on the right above is  
$$
\ll \frac{t}{(\log t)^{2c_1+1}}.
$$
Putting everything together, we get that  
\begin{equation}
\label{eq:1}
\sum_{\substack{q\le t\\ q\equiv a(\mmod{p})}} \log q=\frac{t}{\log q}+O\left(t^{1/2}+\frac{t\log t}{T}+\frac{t^{1/2} T^5}{\phi(p)}+\frac{t}{\phi(p) (\log t)^{2c_1+1}}\right).
\end{equation}
Since $\phi(p)<p\le T^{1/2}=t^{o(1)}$, the first and third terms above are all dominated by the fourth term, while the second one is
$$
\frac{t\log t}{T}.
$$
It remains to show that this is also dominated by the fourth one. Since $T^{1/2}\ge p>\phi(p)$, it suffices to show that
$$
T^{1/2}>(\log t)^{2c_1+2}.
$$
This is equivalent to 
$$
\frac{\log t}{\log_2 x}>(2c_1+1)\log_2 t, \qquad {\text{\rm or}}\qquad \frac{\log t}{\log_2 t}>(2c_1+1)\log_2 x.
$$
The function $t\mapsto \log t/\log_2 t$ is increasing for $t>e^e$, and since for us $t>z_1^{\log_2 x}>z_1$, we have 
$$
\frac{\log t}{\log_2 t}>\frac{(\log x)^{1/\log_3 x}}{((\log_2 x)/\log_3 x)}
$$
and the function on the right hand side exceeds any multiple of $\log_2 x$ for $x$ sufficiently large. 
Hence, the four contributions to the
error term in \eqref{eq:1} are dominated by the fourth one, and so
$$
\sum_{\substack{q\le t\\ q\equiv a(\mmod{p})}} \log q=\frac{t}{\phi(p)}\left(1+O\left(\frac{1}{(\log t)^A}\right)\right),
$$
where we can take $A=2c_1$. This is uniform for all $t$ in our range, and now the  desired conclusion 
follows by Abel summation. 
 \end{proof}

Let $k\ge 1$.
Put $$M(x,k)=\sum_{n\le x,~\Omega(n)=k}\mu(n)^2.$$ 
Using elementary analytic
number theory an asymptotic for
$M(x,k)$ can be
derived (cf. 
Hardy and Wright \cite[Theorem 437]{HW}). We will need a slightly
stronger result.
\begin{lem}
\label{mxk}
We have
\begin{equation*}
M(x,k)={x\over \log x}{(\log \log x)^{k-1}\over (k-1)!}
\left(1+o_k\left({1\over \log \log x}\right)\right).
\end{equation*}
\end{lem}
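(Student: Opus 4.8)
The plan is to prove the asymptotic for $M(x,k)$, the count of squarefree integers up to $x$ with exactly $k$ prime factors, by comparing it to the unrestricted count $N(x,k)$ and showing the non-squarefree contribution is negligible at the required order. Recall from \eqref{landi2} (the Selberg--Delange refinement) that $N(x,k)$ already has the claimed main term and error; so the task reduces to estimating the difference $N(x,k)-M(x,k)$, which counts integers $n\le x$ with $\Omega(n)=k$ that are \emph{not} squarefree.

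First I would write $N(x,k)-M(x,k)=\sum_{n\le x,\ \Omega(n)=k,\ \mu(n)=0}1$. An integer $n$ with $\Omega(n)=k$ that is not squarefree is divisible by $p^2$ for some prime $p$; writing $n=p^2 m$, the factor $m$ satisfies $m\le x/p^2$ and $\Omega(m)=k-2$. Summing over the square part, the crude bound
\[
N(x,k)-M(x,k)\le \sum_{p}\ \sum_{\substack{m\le x/p^2\\ \Omega(m)=k-2}}1=\sum_p N\!\left(\tfrac{x}{p^2},k-2\right)
\]
should suffice. Applying \eqref{landi2} (or Landau's estimate \eqref{landau}) to each inner sum gives $N(x/p^2,k-2)\ll (x/p^2)(\log\log x)^{k-3}/\log x$ for the relevant range of $p$, and since $\sum_p p^{-2}$ converges the total is $O\!\big(x(\log\log x)^{k-3}/\log x\big)$. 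Comparing with the main term $\frac{x}{\log x}\frac{(\log\log x)^{k-1}}{(k-1)!}$, this difference is smaller by a factor $(\log\log x)^{-2}$, hence comfortably inside the stated error term $o_k\big((\log\log x)^{-1}\big)$ relative to the main term.

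The main obstacle, and the point requiring care, is the uniformity of the estimate for $N(x/p^2,k-2)$ when $p$ is large relative to $x$: once $p^2$ is close to $x$ the quotient $x/p^2$ is small and the asymptotic \eqref{landi2} no longer applies with a useful error term. I would handle this by splitting the sum over $p$ at some threshold (say $p\le x^{1/4}$ versus $p> x^{1/4}$): for small $p$ the asymptotic applies cleanly, and for large $p$ the inner count $N(x/p^2,k-2)$ is bounded by the \emph{total} count $\sum_{m\le x/p^2}1\le x/p^2$, whose tail $\sum_{p>x^{1/4}}x/p^2\ll x^{3/4}$ is negligible. A second point is that for $k=1$ and $k=2$ the index $k-2$ becomes non-positive; these base cases should be dispatched directly, noting that for $k=1$ we have $M(x,1)=N(x,1)=\pi(x)$ and the claim is exactly the Prime Number Theorem in the form of \eqref{landi2}, while $k=2$ is a short direct argument.

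Putting these pieces together yields
\[
M(x,k)=N(x,k)+O\!\left(\frac{x(\log\log x)^{k-3}}{\log x}\right)
={x\over \log x}{(\log \log x)^{k-1}\over (k-1)!}\left(1+o_k\!\left({1\over \log \log x}\right)\right),
\]
since the error absorbed from removing the non-squarefree integers is of strictly smaller order than the displayed correction term. The only genuinely analytic input is \eqref{landi2} for $N(x,k)$; everything else is the elementary sieve-type estimate above together with the convergence of $\sum_p p^{-2}$.
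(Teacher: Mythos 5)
Your proof is correct and takes essentially the same approach as the paper's: both reduce $M(x,k)$ to $N(x,k)$ by writing each non-squarefree $n$ with $\Omega(n)=k$ as $p^2m$ with $\Omega(m)=k-2$, then bound $\sum_p N(x/p^2,k-2)$ using the refined estimate for small $p$ and the trivial bound $N(x/p^2,k-2)=O(x/p^2)$ for large $p$ (the paper splits at $x^{1/3}$, you at $x^{1/4}$, which is immaterial), treating $k=1,2$ as separate base cases. The only cosmetic difference is that the paper spells out the $k=2$ case explicitly as $M(x,2)=N(x,2)-\sum_{p\le\sqrt{x}}1$, whereas you leave it as "a short direct argument."
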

\begin{proof}
As remarked in the introduction one has the estimate
\begin{equation} 
\label{nxk}
N(x,k)={x\over \log x}{(\log \log x)^{k-1}\over (k-1)!}\left(1+o_k\left({1\over \log \log x}\right)\right).
\end{equation}
For $k=1$ the result is merely a weaker variant
of Theorem \ref{PNT}, the Prime Number Theorem.
For $k\ge 2$ the idea of the proof is to relate $M(x,k)$ to $N(x,k)$ and
use the estimate \eqref{nxk}.
Noting that 
$M(x,2)=N(x,2)-\sum_{p\le \sqrt{x}}1$ and using \eqref{nxk} with $k=2$, the claim follows for $k=2$ and so
we may assume that $k\ge 3$.
\par Observe that  if $\Omega(n)=k$, then either $n$ is square-free or $n=p^2 m$ with
$\Omega(m)=k-2$ and $p$ a prime. It follows that 
$$M(x,k)=N(x,k)+O\Big(\sum_{p\le \sqrt{x}}N(\frac{x}{p^2},k-2)\Big).$$
Using the trivial estimate $N(x,k-2)=O(x)$ in the range $x^{1/3}\le p\le \sqrt{x}$ and 
the non-trivial estimate \eqref{nxk} in the range  $p<x^{1/3}$, the proof is easily completed.
\end{proof}
\begin{cor}
\label{cor1}
The counting function $N_T(x)$ satisfies the asymptotic estimate
\eqref{moreprecise}.
\end{cor}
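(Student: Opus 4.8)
The plan is to express $N_T(x)$ in terms of the squarefree counting function $M(x,k)$ of Lemma~\ref{mxk} and then isolate the role of the prime $2$. By definition $M(x,3)$ counts the squarefree $n\le x$ with exactly three distinct prime factors (for squarefree $n$ one has $\Omega(n)=\omega(n)$). Such an $n$ is either odd, in which case it is ternary and contributes to $N_T(x)$, or even, in which case $n=2qr$ with $q<r$ odd primes. Hence
$$N_T(x)=M(x,3)-E(x),\qquad E(x):=\#\{2qr\le x:\ q<r\text{ odd primes}\}.$$
First I would estimate $M(x,3)$ directly from Lemma~\ref{mxk} with $k=3$, giving $M(x,3)=\frac{x(\log\log x)^2}{2\log x}\bigl(1+o(1/\log\log x)\bigr)$; crucially the error here is $o\!\bigl(\frac{x\log\log x}{\log x}\bigr)$, that is, smaller than the secondary term we are aiming for.

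Next I would handle the even part. Writing $n=2m$ with $m=qr\le x/2$, the quantity $E(x)$ counts the odd squarefree $m\le x/2$ with $\omega(m)=2$. Since $M(x/2,2)$ counts all such $m$ (odd or even) and the even ones are exactly $m=2s$ with $s\le x/4$ an odd prime, we get $E(x)=M(x/2,2)-(\pi(x/4)-1)$. Applying Lemma~\ref{mxk} with $k=2$ and absorbing the shift from $x$ to $x/2$ (which affects only lower-order terms, since $\log(x/2)=\log x+O(1)$ and $\log\log(x/2)=\log\log x+O(1/\log x)$) yields $M(x/2,2)=\frac{x\log\log x}{2\log x}(1+o(1))$, while $\pi(x/4)=O(x/\log x)$ is negligible by comparison. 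Thus $E(x)=\frac{x\log\log x}{2\log x}(1+o(1))$.

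Finally I would combine the two estimates. Subtracting and absorbing the $M(x,3)$ error into the $(1+o(1))$ factor of the $E(x)$ term,
$$N_T(x)=\frac{x(\log\log x)^2}{2\log x}-\frac{x\log\log x}{2\log x}(1+o(1))=\frac{x(\log\log x)^2}{2\log x}\Bigl(1-\frac{1+o(1)}{\log\log x}\Bigr),$$
which is exactly \eqref{moreprecise}. There is no serious obstacle here; the one point requiring care is the bookkeeping of orders of magnitude — one must verify that the error supplied by Lemma~\ref{mxk} for $k=3$ is genuinely $o\!\bigl(\frac{x\log\log x}{\log x}\bigr)$, so that it does not corrupt the secondary term, and that the contribution $E(x)$ of the integers divisible by $2$ is precisely what produces the $-(1+o(1))/\log\log x$ correction. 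In short, the entire deviation from the naive main term $\frac{x(\log\log x)^2}{2\log x}$ is governed by the even squarefree integers with three prime factors.
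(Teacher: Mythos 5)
Your proposal is correct and is essentially the paper's own proof: the identity $N_T(x)=M(x,3)-M(x/2,2)+\pi(x/4)+O(1)$ that the paper states directly is exactly what your decomposition $N_T(x)=M(x,3)-E(x)$ with $E(x)=M(x/2,2)-(\pi(x/4)-1)$ rearranges to, after which both arguments apply Lemma~\ref{mxk} with $k=3$ and $k=2$. Your write-up merely makes explicit the derivation of that identity and the order-of-magnitude bookkeeping that the paper leaves to the reader.
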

\begin{proof}
Note that $N_T(x)=M(x,3)- M\left(x/2,2\right)+ \pi(x/4)+O(1)$
and use the lemma for $k=3$ and $k=2$.
\end{proof}

\section{The proof of Theorem \ref{thm:assT}}
\begin{proof}[Proof of Theorem \ref{thm:assT}]
We observe that for ternary $n$, 
$$
p^3<n\le x,\quad {\text{\rm therefore}}\quad p<x^{1/3},
$$
and similarly 
$$
pq^2<n\le x,\quad {\text{\rm therefore}}\quad q<\sqrt{x/p}.
$$ 
Thus, 
\begin{equation} \label{firstT}
\begin{split}
|\cT (x)| &= \sum_{3 \leq p < x^{\frac{1}{3}}} \sum_{\bfrac{p < q < \sqrt{x/p}}{q \equiv \pm 1(\mmod p)}} \sum_{\bfrac{q < r \leq \frac{p-1}{p-2}(q-1)}{\bfrac{pqr \leq x}{r \equiv q (\mmod p)}}} 1.
\end{split}
\end{equation}

Denote the inner sum over $r$ by $\sigma_r$. We start with a lower bound on $|\cT (x)|$. Take $p=3$. Then $r \equiv q (\mmod 3)$ and $q<r<2q-2$. Thus, by Theorem~\ref{SW},
$\sigma_r \gg \frac{q}{\log q}$
for $q\ge q_0$. Note also that any such $r$ leads to a legitimate choice for 
$n\in {\mathcal T}(x)$ provided that $3q(2q) \leq x$, that is, whenever $q\le \sqrt{x/6}$. Thus, for $x\ge x_0$
\begin{eqnarray*}
|\cT(x)| \gg \sum_{q_0\le q\le \sqrt{x/6}} \frac{q}{\log q} \gg \int_{q_0}^{\sqrt{x/6}} \frac{t d\pi(t)}{\log t} \gg \frac{t^2}{(\log t)^2}\Big|_{t=2}^{\sqrt{x/6}}\gg \frac{x}{(\log x)^2}.
\end{eqnarray*}
We now determine an asymptotic for ${\mathcal T}(x)$ and show that  $x/(\log x)^2$ is indeed the
correct order of magnitude.

Neglecting the primality condition on $r$ we obtain
\begin{equation}
\label{eq:interval}
\sigma_r \le \pi\left(q-1+\frac{q-1}{p-2}; p;q\right)-\pi(q;p,q)\le \frac{1}{p}\left(\frac{q-1}{p-2}-1\right)+1\ll \frac{q}{p^2}+1.
\end{equation}
We now sum up over all $q$ forgetting the congruence condition on $q$. 
It follows that for a fixed $p$, the number
of constrained ternary integers under scrutiny is of
order at most
\begin{equation}
\label{eq:pisfixed}
\frac{1}{p^2} \left(\sum_{q\le \sqrt{x/p}} q\right)+\pi\left(\sqrt{\frac{x}{p}}\right).
\end{equation}
For us $p<x^{1/3}$, therefore $\log(x/p)\gg \log x$, and
thus the second term in \eqref{eq:pisfixed} is, by the 
Chebychev estimates \eqref{Cestimates}, 
$$
\pi\left(\sqrt{\frac{x}{p}}\right)\ll \frac{\sqrt{x}}{\sqrt{p} \log(x/p)}\ll \frac{\sqrt{x}}{\sqrt{p} \log x}.
$$
For the first term in \eqref{eq:pisfixed} above, we can also use the Chebychev estimates and get that
$$ 
\sum_{q\le \sqrt{x/p}} q\ll \int_{2}^{\sqrt{x/p}} t d\pi(t)\ll \frac{t^2}{\log t}\Big|_{t=2}^{t=\sqrt{x/p}}\ll \frac{x}{p\log(x/p)}\ll \frac{x}{p\log x}.
$$
Thus, for a fixed $p$, the number of choices for $n$ is at most of order 
\begin{equation}
\label{eq:pisfixed2}
\ll \frac{x}{p^3 \log x}+\frac{\sqrt{x}}{\sqrt{p} \log x}.
\end{equation}
We now sum up over $p$. We deal first with the second term in \eqref{eq:pisfixed2}. There, even forgetting that $p$ is prime, we get that this term contributes at most
$$
\frac{\sqrt{x}}{\log x} \sum_{p\le x^{1/3}} \frac{1}{\sqrt{p}}\ll \frac{\sqrt{x}}{\log x} \int_{2}^{x^{1/3}} \frac{dt}{\sqrt{t}}\ll \frac{x^{\frac{1}{2}+\frac{1}{6}}}{\log x}  = \frac{x^{2/3}}{\log x}
$$
to ${\mathcal T}(x).$
Next we deal with the first term in \eqref{eq:pisfixed2}, when summed up over all $p>\log x$. There we get, even forgetting the condition that $p$ is prime, that this term contributes
\begin{equation}
\label{eq:calcu}
\frac{x}{\log x} \sum_{p>\log x} \frac{1}{p^3} \ll \frac{x}{\log x} \int_{\log x}^{\infty} \frac{dt}{t^3} \ll \frac{x}{\log x} \left(-\frac{1}{t^2} \Big|_{t=\log x}^{t=\infty}\right)\ll \frac{x}{(\log x)^3}.
\end{equation}
Thus, (\ref{eq:pisfixed}) is small compared to $|\cT(x)|$ when $p > \log x$. 
We see that the main contribution comes from $p\le \log x$ and from now on, we work under this assumption. 
Let us now go back to \eqref{eq:interval} and assume in addition that $q<\sqrt{x}/ \log x$. Summing up over all primes $q\le \sqrt{x}/ \log x$ of this type, we get instead of \eqref{eq:pisfixed} the number of integers $n\in {\mathcal T}(x)$ of size at most
$$
\frac{1}{p^2} \sum_{q\le \sqrt{x}/ \log x } q+\pi \left(\sqrt{x/p}\right) \ll  \frac{x}{p^2 (\log x)^3},
$$
since $p\le \log x$. Summing up over all $p$, we get a contribution of $O \left(x/(\log x)^3 \right)$ to $|\cT(x)|$, which is small.

So, from now on we work in the range $p\le \log x$ and $\sqrt{x}/\log x <q<\sqrt{x/p}$. One can rewrite (\ref{firstT}) as follows
\begin{equation*}
\begin{split}
|\cT (x)| &= \sum_{3 < p \leq \log x} \sum_{\bfrac{ \frac{\sqrt{x}}{\log x} < q < \sqrt{x/p}}{q \equiv \pm 1(\mmod p)}} \sum_{\bfrac{q < r \leq \min \left(\frac{p-1}{p-2}(q-1), \frac{x}{pq}\right)}{{r \equiv q (\mmod p)}}} 1 + O\left(\frac{x}{(\log x)^3}\right).
\end{split}
\end{equation*}
Write $\sigma_r'$ for the inner sum.
It clearly makes sense for large $x$ and $p\le \log x$ to write $q_p$ for the solution  $q$ of
$$
\frac{x}{pq}=q-1+\frac{q-1}{p-2}=\left(\frac{p-1}{p-2}\right)(q-1).
$$
Hence,
$$q-\frac{1}{2}=\sqrt{\frac{x(p-2)}{p(p-1)}+O(1)}=\sqrt{\frac{x(p-2)}{p(p-1)} \left(1+O\left(\frac{p}{x}\right)\right)}={\sqrt{\frac{x(p-2)}{p(p-1)}}}+O(1),
$$
which gives
\begin{equation}
\label{eq:qp}
q_p=\sqrt{\frac{x(p-2)}{p(p-1)}}+O(1).
\end{equation}
Suppose first that $q\le q_p$. Then, by Theorem~\ref{SW}, we have
\begin{eqnarray*}
\sigma_r' = \frac{\pi(q-1+(q-1)/(p-2))-\pi(q)}{\varphi(p)} + O\left(qe^{-c_0{\sqrt{\log q}}}\right)
\end{eqnarray*}
for some constant $c_0>0$. For us, $\log q=(1/2+o(1))\log x$. Further, by 
Theorem \ref{PNT}  we have that
\begin{eqnarray*}
\pi\left(q-1+\frac{q-1}{p-2}\right)-\pi(q) = \int_q^{q-1+\frac{q-1}{p-2}} \frac{dt}{\log t}+ O\left(q
e^{-c_1(\log q)^{3/5} (\log_2 q)^{-1/5}}\right)
\end{eqnarray*}
for some constant $c_1>0$. Putting everything together, we get that when $p,~q\le q_p$ are fixed
$$
\sigma_r'=\frac{1}{p-1}\int_q^{q-1+\frac{q-1}{p-2}} \frac{dt}{\log t}+O\left(qe^{-c_2{\sqrt{\log q}}}\right)
$$
for some constant $c_2>0$. We split the integral as
$$
\int_q^{q-1+\frac{q-1}{p-2}} \frac{dt}{\log t}=\int_q^{q +\frac{q}{p-2}} \frac{dt}{\log t}+\int_{q+\frac{q}{p-2}}^{q-1+\frac{q-1}{p-2}} \frac{dt}{\log t}.$$
In the second integral, the length of the interval is $O(1)$ and the integral is of size $O(1/\log x)$. Thus, 
\begin{eqnarray}
\label{eq:int1}
\sigma_r' = \frac{1}{p-1}\int_{q}^{q+\frac{q}{p-2}}\frac{dt}{\log t}
+ O\left(qe^{-c_2{\sqrt{\log q}}}\right).\nonumber
\end{eqnarray}
Now we estimate the
latter integral. We make the substitution $t=qu$ for which $dt=qdu$. We get
\begin{equation*}
\begin{split}
\int_q^{q+\frac{q}{p-2}} \frac{dt}{\log t} &= 
\int_1^{1+\frac{1}{p-2}} \frac{qdu}{\log q+\log u}\\
&= \frac{q}{\log q} \int_1^{1+\frac{1}{p-2}} du - \frac{q}{\log q} \int_1^{1+\frac{1}{p-2}} \frac{\log u}{\log q+\log u} du\\
&=\frac{q}{(p-2)\log q} + O \left(\frac{q}{p^2 (\log x)^2}\right).
\end{split}
\end{equation*}
In the last inequality above, we used the fact that 
$$0\le \log u\le \log\left(1+\frac{1}{p-2}\right)\le \frac{1}{p-2}$$ for all $u\in [1,1+1/(p-2)]$.
Further, notice that since $\sqrt{x}/\log x<q<\sqrt{x/p}$, we have that 
$\log q=\frac{1}{2}\log x+O(\log\log x)$ 
and hence,
\begin{eqnarray*}
\frac{1}{\log q}= \frac{2}{\log x} \left(1+O\left(\frac{\log_2 x}{\log x}\right)\right)^{-1} = \frac{2}{\log x}+O\left(\frac{\log_2 x}{(\log x)^2}\right).
\end{eqnarray*}
Thus,
\begin{equation}
\label{eq:11}
\sigma_r'=\frac{2q}{(p-1)(p-2)\log x}+O\left(\frac{q\log_2 x}{p^2(\log x)^2}\right).
\end{equation}
Next consider $q>q_p$. Then certainly $x/(pq)\asymp q$ (in fact, $q_p>\sqrt{x/(4p)}$ for large enough $x$).   So, by the same argument and using Theorems~\ref{PNT} and \ref{SW},
 we have
 \begin{equation}  \label{eq:22}
 \begin{split}
 \sigma_r' &= \frac{\pi\left(x/(pq)\right)-\pi(q)}{\varphi(p)}+O\left(qe^{-c_3{\sqrt{\log x}}}\right)  = 
 \frac{x/(pq)-q}{(p-1)\log q} + O \left(\frac{q}{p(\log x)^2}\right)\\
&= \frac{2}{(p-1)\log x} \left(\frac{x}{pq}-q\right) + O \left(\frac{q \log_2 x}{p (\log x)^2}\right).
 \end{split}
 \end{equation}
Combining \eqref{eq:11} and \eqref{eq:22}, we get  
 $$
 \sigma_r' = \frac{2 a_{p,q}(x)}{(p-1)\log x} + O\left(\frac{q\log_2 x}{p(\log x)^2}\right), \;\;\;\text{ where }\;\;\; a_{p,q}(x)=\left\{\begin{matrix} \frac{q}{p-2} & {\text{\rm if}} & q\le q_p;\\
 \frac{x}{pq}-q & {\text{\rm if}} & q> q_p.
 \end{matrix}\right.
 $$
We sum up over $q$ and first deal with the error term. Since
\begin{equation*}
\begin{split}
\sum_{p\le \log x}\sum_{\substack{q \le \sqrt{x/p}\\  q\equiv \pm 1(\mmod p)}} \frac{q}{p} &\ll \sum_{p\ge 3} \frac{1}{p} \int_3^{\sqrt{x/p}} t \, d\pi(t;p,\pm 1)\cr
&\ll \sum_{p\ge 3} \left(\frac{t^2}{p(p-1)\log t}\Big|_{2}^{\sqrt{x/p}} \right)
\ll \frac{x}{\log x},
\end{split}
\end{equation*}
then the error term coming from $\sigma_r$ is 
$O\left(x(\log x)^{-3} \log_2 x \right)$. 
Thus, we have
\begin{equation} \label{Twithapq}
|\cT(x)| = \sum_{p \leq \log x} \sum_{\bfrac{\frac{\sqrt{x}}{\log x} < q \leq \sqrt{x/p}}{q \equiv \pm 1 (\mmod p)}} \frac{2a_{p,q}(x)}{(p-1)\log x}+O \left(\frac{x\log_2 x}{(\log x)^3} \right).
\end{equation}
It remains to deal with the main term. We let $\varepsilon\in \{\pm 1\}$ and sum over all $q$ in the interval $\sqrt{x}/\log x<q<q_p$ such that $q\equiv \varepsilon (\mmod p)$. By Abel's summation formula, one gets
\begin{eqnarray}
\label{eq:Abel}
\sum_{\substack{\frac{\sqrt{x}}{\log x}<q\le q_p\\ q\equiv \varepsilon (\mmod p)}} q & = & q_p\pi(q_p;p,\varepsilon)-\frac{\sqrt{x}}{\log x}\pi\left(\frac{\sqrt{x}}{\log x}; p,\varepsilon \right) - \int_{\frac{\sqrt{x}}{\log x}}^{q_p} \pi(t;p,\varepsilon) dt.
\end{eqnarray}
By combining Theorem~\ref{PNT} and Theorem~\ref{SW}, we 
obtain that
$$
\pi(t;p,\varepsilon)=\frac{t}{(p-1)\log t}+O\left(\frac{t}{p(\log t)^2}\right)\quad {\text{\rm uniformly~in}}\quad t\in \left[\frac{\sqrt{x}}{\log x},\sqrt{\frac{x}{p}}~\right].
$$
Thus, one can check that 
$$\sum_{\substack{\frac{\sqrt{x}}{\log x}<q\le q_p\\ q\equiv \varepsilon (\mmod p)}} q= \frac{q_p^2}{(p-1)\log x}+O\left(\frac{x\log_2 x}{p(\log x)^2}\right).$$
This was for a fixed $\varepsilon\in \{\pm 1\}$ and for $q\le q_p$. It remains to deal with the contribution of $q$ in the range $q_p< q\le \sqrt{x/p}$. For this, we need to compute
$$
\sum_{\substack{q_p< q\le \sqrt{x/p}\\ q\equiv \varepsilon (\mmod p)}} \left(\frac{x}{pq}-q\right) =
\frac{x}{p}\sum_{\substack{q_p< x\le \sqrt{x/p}\\ q_p\equiv \varepsilon (\mmod p)}} \frac{1}{q}-\sum_{\substack{q_p\le q\le \sqrt{x/p}\\ q\equiv \varepsilon (\mmod p)}} q.
$$
The second sum is, by the above arguments,
\begin{equation*}
\sum_{\substack{q_p< q\le \sqrt{x/p}\\ q\equiv \varepsilon (\mmod p)}} q =
\frac{x}{p(p-1)\log x}-\frac{q_p^2}{(p-1)\log x}+O\left(\frac{x\log_2 x}{p(\log x)^2}\right).
\end{equation*}
 Accounting for the fact that we have two values of $\varepsilon$
 and inserting the above estimates into (\ref{Twithapq}), we get 
 \begin{equation*}
|\cT(x)| = \sum_{p \leq \log x} \left(\frac{2x}{p (p-1) \log x} \right. \sum_{\bfrac{q_p \leq q \leq \sqrt{x/p}}{q \equiv \pm 1 (\mmod p)}} \frac{1}{q} + \left. \frac{4 f(x,p,q_p)}{(p-1)(\log x)^2} \right)+O \left(\frac{x\log_2 x}{(\log x)^3} \right),
\end{equation*}
where
$$f(x,p,q_p) = \frac{q_p^2}{(p-2)(p-1)}-\frac{x}{p(p-1)}+\frac{q_p^2}{p-1}.$$
Using \eqref{eq:qp}, we see that
$$
q_p^2=\frac{x(p-2)}{p(p-1)}+O(q_p)=\frac{x(p-2)}{p(p-1)}+O(\sqrt{x}),
$$
and hence 
 \begin{eqnarray*}
f(x,p,q_p) & = & \frac{x}{p(p-1)^2}-\frac{x}{p(p-1)}+\frac{x(p-2)}{p(p-1)^2} + O\left(\frac{\sqrt{x}}{p}\right)=  O\left(\frac{\sqrt{x}}{p}\right).
 \end{eqnarray*}
Thus, the contribution coming from the sum over $p$ to $\cT(x)$ of 
the term that contains $f(x,p,q_p)$, is 
$$O\left(\frac{\sqrt{x}}{(\log x)^2} \right. \sum_{p \leq \log x} \left. \frac{1}{p(p-1)}\right),$$ 
which is small. We 
conclude that
$$|\cT(x)| =  \frac{2x}{\log x}\sum_{p \leq \log x} \frac{1}{p (p-1)} \sum_{\bfrac{q_p \leq q \leq \sqrt{x/p}}{q \equiv \pm 1 (\mmod p)}} \frac{1}{q} + O \left(\frac{x\log_2 x}{(\log x)^3} \right).$$
Using again the Abel summation formula we get (after a short computation) that
for a fixed $\epsilon \in \{\pm 1\}$,
\begin{equation*}
\sum_{\substack{q_p\le q\le \sqrt{x/p}\\ q\equiv \varepsilon (\mmod  p)}} \frac{1}{q}
=
\frac{1}{p-1}\frac{\log (\frac{p-1}{p-2})}{\log x}+O\left(\frac{\log_2 x}{p(\log x)^2}\right).
\end{equation*}
Since there are two values for $\varepsilon$, the contribution of a fixed $p$ to the number of elements of ${\mathcal T}(x)$ is
$$
\frac{4}{p(p-1)^2 (\log x)^2} \log \left(\frac{p-1}{p-2}\right)+O\left(\frac{x\log_2 x}{p^3(\log x)^2}\right).
$$
We now sum over $3\le p\le \log x$, getting
$$
\frac{4}{(\log x)^2} \left(\sum_{p\le \log x} \frac{1}{p(p-1)^2} \log \left(\frac{p-1}{p-2}\right)\right)+O\left(\frac{x \log_2 x}{(\log x)^3} \sum_{p\ge 3} \frac{1}{p^3}\right).
$$
The error term is $O(x\log_2 x/(\log x)^3)$. As for the main term, we can take the sum of the series to infinity introducing a tail of size
$$
\sum_{p>\log x} \frac{1}{p(p-1)^2}\log \left(\frac{p-1}{p-2}\right)\ll \sum_{m>\log x} \frac{1}{m^4}\ll \frac{1}{(\log x)^3}.
$$
The result is therefore proved.
\end{proof}

\section{Proof of Theorem \ref{thm::mod_pq}}
\begin{proof}[Proof of Theorem \ref{thm::mod_pq}]
We proceed as in the proof of Theorem~\ref{thm:assT}.
Since $p^3 < pqr \leq x$, 
it follows that $p < x^{\frac{1}{3}}$ and similarly $p q^2 < pqr \leq x$ implies $q < \sqrt{x/p}$.
Thus, we want to count
\begin{equation}
\label{eq01}
\left| \cT_a(x) \right|= \sum_{p \leq x^{\frac{1}{3}}} \sum_{p < q < \sqrt{x/p}} \sum_{\bfrac{q<r \leq x/(pq)}{r \equiv a (\mmod pq)}} 1.
\end{equation}
Let $p=3$ and $q=5$. Then $r$ runs over 
some arithmetic progression modulo $15$ in the range $5 < r \leq x/15$. By Theorem~\ref{SW}, it follows that $|\cT_a(x)| \gg x/\log x$.

We denote the inner sum over $r$ in~\eqref{eq01} by $\sigma_r$. By neglecting the condition of $r$ being prime we obtain
$$\sigma_r = \sum_{\bfrac{q<r \leq x/(pq)}{r \equiv a (\mmod pq)}} 1 \leq \frac{1}{pq} \left(\frac{x}{pq}-q\right) = \frac{x}{(pq)^2} -\frac{1}{p}.$$
Thus,
\begin{equation*}
\begin{split}
\left| \cT_a(x) \right| &= x \sum_{p \leq x^{\frac{1}{3}}} \frac{1}{p^2}\sum_{p < q < \sqrt{x/p}} \frac{1}{q^2} - 
\sum_{p \leq x^{\frac{1}{3}}} \frac{1}{p} \sum_{p < q < \sqrt{x/p}} 1.\\
\end{split}
\end{equation*}
Define
$$\cT'_a(x) =\{pqr \leq x: \; 3 \leq p<q<r,~r \equiv a (\mmod pq),~g\geq (\log x)^2\}.$$
 Let $\cT'_a(x)$ count the integers counted by
$\cT_a(x)$ with the additional requirement that
$q \geq (\log x)^2$.
We then have 
\begin{equation*}
\begin{split}
\left| \cT'_a(x) \right| &< \frac{x}{(\log x)^2} \sum_{p \leq x^{\frac{1}{3}}} \frac{1}{p^2}\sum_{p < q < \sqrt{x/p}} \frac{1}{q}
<   \frac{x\log_2 x }{(\log x)^2} \sum_{p \leq x^{\frac{1}{3}}} \frac{1}{p^2} 
\ll  \frac{x\log_2 x}{(\log x)^2},
\end{split}
\end{equation*}
where we used Theorem \ref{mertens}.
Similarly if $p \geq (\log x)^2$, then we can improve the bound to
\begin{equation*}
\begin{split}
\left| \cT'_a(x) \right| \ll  \frac{x\log_2 x}{(\log x)^4}.
\end{split}
\end{equation*}
By the above we get
$$\left|\cT_a(x)\right| =  \sum_{p < (\log x)^2} \sum_{\bfrac{p < q < \sqrt{x/p}}{q < (\log x)^2}} \sigma_r + O \left(\frac{x\log_2 x}{(\log x)^2}\right).$$
On noticing that $\pi(q;a,pq)=\pi(q)$, we obtain
$$\sum_{p < (\log x)^2}\sum_{p < q < (\log x)^2} \pi \left(q\right) \ll \sum_{p < (\log x)^2} \int_{p}^{(\log x)^2} t d \pi(t) \ll \frac{(\log x)^6}{(\log_2 x)^2}.$$
We then write
\begin{equation*}
\sigma_r = \pi \left(\frac{x}{pq};a,pq\right)-\pi(q;a,pq),
\end{equation*}
and get
\begin{equation*}
\begin{split}
\left|\cT_a(x)\right| &=  \sum_{p < (\log x)^2}\sum_{p < q < (\log x)^2} \pi \left(\frac{x}{pq};a,pq\right) + O \left(\frac{x\log_2 x}{(\log x)^2}\right).\\
\end{split}
\end{equation*}
Since $\log (x/pq) = \log x + O(\log_2 x)$, the main term above equals
\begin{equation*}
\begin{split}
x &\sum_{p < (\log x)^2} \frac{1}{p(p-1)}\sum_{p < q < (\log x)^2} \frac{1}{q(q-1)} \frac{1}{\log \left(\frac{x}{pq}\right)} \\
= \frac{x}{\log x} &\sum_{p < (\log x)^2} \frac{1}{p(p-1)}\sum_{p < q < (\log x)^2} \frac{1}{q(q-1)} + O \left(\frac{x\log_2 x}{(\log x)^2}\right).
\end{split}
\end{equation*}
We complete the sums above to infinity with an error of a suitable size and get 
$$|\cT_a(x)| = C_2 \frac{x}{\log x} + O \left( \frac{x\log_2 x}{(\log x)^2}\right),$$
thus concluding the proof.


\end{proof}

\section{The proof of Theorem \ref{main}}
\noindent Note that there are $(p-1)^2$ possible pairs of residue classes $(a,b)$ modulo $p$ with $1\le a,b\le p-1$. Recall that
\begin{equation}
\label{pi3x}
N_T(x)=|\{n=pqr\le x: 3\le p<q<r\} |\sim \frac{x(\log_2 x)^2}{2\log x}.
\end{equation}
Hence, by restricting for each $p$ the number of possibilities of the pair  $(q,r)$ modulo $p$ to 
a fraction $\alpha$ of the total number of possibilities, we end up with a set of positive integers
the cardinality of which, if we count them up to $x$, is asymptotic to $\alpha$ times the total number of positive integers $n\le x$ with exactly three prime factors $p<q<r$. 
Notice that a comparison of Theorem~\ref{main} with \eqref{pi3x} shows
that this simple heuristic idea is 
actually true.
\par For ease of exposition in the proof of Theorem~\ref{main}, we now let
$$y:=\exp\Big(\frac{\log x}{\log_2 x}\Big),~z_1:=\exp\Big(\exp\Big(\frac{\log _2 x}{\log_3 x}\Big)\Big),~y_1:=\exp\Big(\frac{\log x}{\exp((\log_3 x)^2)}\Big).$$
\begin{proof}[ The proof of Theorem 
\ref{main}] Let $n=pqr\le x$ with $p<q<r$. Then
$$
p^3<x\quad {\text{\rm and}}\quad  pq^2<x,
$$
and so
$$
p<x^{1/3}\quad {\text{\rm and}}\quad q<{\sqrt{{x}/{p}}}.
$$
We may also assume that $n>x/\log x$, since otherwise there are at most $O(x/\log x)$ integers $n\le x$, regardless of the number of their prime factors. Thus, 
$$
\frac{x}{pq\log x}<r\leq \frac{x}{pq}.
$$
Furthermore, $r^3>n>x/\log x$, so $r>(x/\log x)^{1/3}$. Fix $p$ and $q$. Since $r \leq x/(pq)$, the number of possibilities for $r$ (disregarding the congruence conditions on $(q,r)$ modulo $p$) is less or equal than
\begin{equation}
\label{eq:111}
\pi\left(\frac{x}{pq}\right)\ll \frac{x}{pq\log(x/pq)}\ll \frac{x}{pq\log x},
\end{equation}
where for the last inequality we used the fact that 
$$\frac{x}{pq} \geq r>\left(\frac{x}{\log x}\right)^{1/3}\gg x^{1/4},\quad {\text{\rm so}}\quad  \log(x/pq)\gg \log x.
$$
Assume $q\in [y,x]$. Then for a fixed $p$, the number of $n\le x$ with such $q$  is by Theorem~\ref{mertens} of order at most
\begin{equation}
\label{eq:2}
\frac{x}{p\log x}\sum_{y<q<x}\frac{1}{q}\ll \frac{x}{p\log x} (\log_2 x-\log_2 y+o(1))\ll \frac{x\log_3 x}{p\log x}.
\end{equation}
Summing up \eqref{eq:2} over all $p\le x^{1/3}$, we get an upper bound of
$$
\frac{x\log_3 x}{\log x}\sum_{p\le x^{1/3}} \frac{1}{p}\ll \frac{x\log_2 x\log_3 x}{\log x}=O\left(N_T(x) \frac{\log_3 x}{\log_2 x}\right)
$$
on the set of such $n\le x$. So, from now on we may assume that $q\le y$. Assume that $p\le z_1$. Then summing up \eqref{eq:111} over all $p\le z_1$ but $q$ fixed, we get a number of $n\le x$ of order
$$
\frac{x}{q\log x} \sum_{p\le z_1} \frac{1}{p}\ll \frac{x}{q\log x} (\log_2 z_1+O(1))\ll 
\frac{x}{q\log x}\frac{\log_2 x}{\log_3 x}.
$$
Summing up the above inequality over all $q\le \sqrt{x}$, we get an upper bound of order
$$
\frac{x}{\log x}\frac{\log_2 x}{\log_3 x}\sum_{q\le \sqrt{x} }\frac{1}{q}\ll \frac{x}{\log x}\frac{(\log_2 x)^2}
{\log_3 x}=O\left(\frac{N_T(x)}{\log_3 x}\right)
$$
on the set of such $n\le x$, so we can ignore such $n$. 
So, from now on $z_1<p<q<y$. Assume next that $q<p^{\log_2 x}$. Then $p<q<p^{\log_2 x}$. Keeping $p$ fixed and summing up inequality \eqref{eq:111} over all such $q$ we get that the number of integers $n\le x$ is of order at most
$$
\frac{x}{p\log x} \sum_{p<q<p^{\log_2 x}}\frac{1}{q}\ll \frac{x}{p\log x}(\log_2 (p^{\log_2 x})-\log_2 p+O(1))\ll \frac{x\log_3 x}{p\log x}.
$$
Summing up over all $p\le x^{1/3}$, we get that the total number of $n\le x$ is of order at most
$$
\frac{x\log_3 x}{\log x} \sum_{p\le x^{1/3}} \frac{1}{p}\ll \frac{x\log_2 x\log_3 x}{\log x}=O\left(N_T(x) \frac{\log_3 x}{\log_2 x}\right),
$$
and this is negligible for us. So, we can ignore such integers $n$ from our argument. 
So, from now on, we may assume that $p^{\log_2 x}<q$. Since also $q<y$, it follows that $p<y^{1/\log_2 x}=\exp(\log x/(\log_2 x)^2)$. In fact, we will do better. We assume that $n$ is such that $y_1\le p<x^{1/3}$. Then keeping $q$ fixed and summing over such $p$, we get a number of such $n$ of order at most
$$
\frac{x}{q\log x}\sum_{y_1\le p\le x^{1/3}} \frac{1}{p}\ll \frac{x}{q\log x} (\log_2 x^{1/3}-\log_2 y_1)\ll \frac{x(\log_3 x)^2}{q\log x}.
$$
Summing up the above bound over all $q\le y$, we get a bound of 
$$
\frac{x(\log_3 x)^2}{\log x} \sum_{q\le y} \frac{1}{q}\ll \frac{x(\log_2 x)(\log_3 x)^2}{\log x}=O\left(N_T(x) \frac{(\log_3 x)^2}{\log_2 x}\right)
$$
for the number of such $n\le x$, and this is negligible for us. So, we may assume that $p\in [z_1,y_1]$.

We plan to apply Lemma \ref{lem:prime}. We deal first with the exceptional primes. Let $P_E$ be the set of such primes. Recall that
$p>\log_2 x$ and $\#P_E\le 2\log_2 x$ 
by Lemma \ref{lem:prime}. Fixing $p\in P_E$, the remaining
$qr\le x/p$ can be chosen in at most
$$
\pi_2\left(\frac{x}{p}\right)\ll \frac{x}{p}\frac{\log_2(x/p)}{\log(x/p)}
\ll \frac{x\log _2 x}{p \log x}
$$
ways. Here we used the fact that 
$p^{\log_2 x}<y<x$ and so
$p<x^{1/\log_2 x}$, which implies that 
$\log(x/p)\gg \log x$. Now $p$ is in a set of at most $2\log_2 x$ elements each larger than $\log_2 x$.
We now sum up over $p\in P_E$. Discarding the information that they are primes and keeping only the information about their sizes and the number of them, we get a contribution of at most
$$
\frac{x\log_2 x}{\log x}\sum_{\substack{p\in P_E\\ \log_2 x <p\\ \# P_E\le 2\log_2 x}} \frac{1}{p}
\ll \frac{x\log_2 x}{\log x} \left(\frac{\#P_E}{\log_2 x}\right)\ll \frac{x\log_2 x}{\log x}=O\Big(\frac{N_T(x)}{\log_2 x}\Big)
$$
ternary integers, and we are done.

Now we are in a situation were
we can apply Lemma \ref{lem:prime}. We may assume that the estimate \eqref{eq:prime} holds for all $p\in [z_1,y_1]$ and all $t$ such that $p^{\log_2 x}<t\le y$. 
So, we fix $p$ in our range. 
We fix pair of residue classes $(a,b)\in \{1,\ldots,p-1\}$ such that $(a,b)\in M(p)$.  We also fix $q$ in the interval
$(p^{\log_2 x}, y]$ such that 
$q\equiv a(\mmod{p})$. So, we need to count the number of primes $$
r\in \left[\frac{x}{pq(\log x)},\frac{x}{pq}\right]
$$
which are congruent to $b (\mmod p)$. Then we need to sum up this over all $b$ modulo $p$ such that $(a,b)\in M(p)$, then over all $q$ which are $a$ modulo $p$, then over all $a (\mmod p)$ such that there exist 
$b$ with $(a,b)\in M(p)$ and finally over all $p$. 
Since \eqref{eq:prime} applies, the first step gives
$$
 \frac{\pi(x/pq)}{\varphi(p)} \left(1+O\left(\frac{1}{\log_2 x}\right)\right)-\frac{\pi(x/pq(\log x))}{\varphi(p)}\left(1+O\left(\frac{1}{\log_2 x}\right)\right),
$$
which equals
\begin{equation}
\label{eq:logx/pq}
\frac{x}{pq \varphi(p)\log(x/pq)} \left(1+O\left(\frac{1}{\log_2 x}\right)\right).
\end{equation}
Note that 
$$
\log(x/pq)=\log x+O(\log y)=(\log x)\left(1+O\left(\frac{1}{\log_2 x}\right)\right),
$$
so because of the presence of the error term we can replace the factor $\log(x/pq)$ in the denominator in \eqref{eq:logx/pq} by $\log x$. Thus, the count so far is
$$
\frac{x}{\varphi(p) pq \log x} \left(1+O\left(\frac{1}{\log_2 x}\right)\right).
$$
Now we sum up over all $q\in [p^{\log_2 x},y]$ which are $q\equiv b (\mmod p)$. By the Abel summation formula,
we infer that
\begin{eqnarray*}
\sum_{\substack{p^{\log_2 x}\le q\le y\\ q\equiv b (\mmod p)}} \frac{1}{q}& = &\left(\frac{\pi(t;p,b)}{t} \Big|_{t=p^{\log_2 x}}^{t=y}\right)+\int_{p^{\log_2 x}}^{y} \frac{\pi(t;p,b)}{t^2} dt\\
& = &\frac{1+O(1/\log_2 x)}{\varphi(p)}\int_{p^{\log_2 x}}^{y} \frac{\pi(t)}{t^2} dt\\
&  = & \frac{1+O(1/\log_2 x)}{\varphi(p)}\int_{p^{\log_2 x}}^{y} \frac{1}{t\log t} \left(1+O\left(\frac{1}{\log t}\right)\right) dt\\
& = & \frac{1+O(1/\log_2 x)}{\varphi(p)}\left(\log_2 y-\log_2 (p^{\log_2 x})+O(1)\right).
\end{eqnarray*}
Note that 
$$
\log_2 y-\log_2 (p^{\log_2 x})=\log_2 x-\log_2 p+O(\log_3 x).
$$ 
Since $p\le z_1$, it follows that
$$
\log_2 x-\log_2 p\ge (\log_3 x)^2.
$$
Thus,
$$
\log_2 y-\log_2 (p^{\log_2 x})=(\log_2 x -\log_2 p)\left(1+O\left(\frac{1}{\log_3 x}\right)\right).
$$
Thus, we get 
$$
\sum_{\substack{p^{\log_2 x}\le q\le y\\ q\equiv 
b (\mmod p)}}\frac{1}{q}=\frac{1}{\varphi(p)} (\log_2 x-\log_2 p)\left(1+O\left(\frac{1}{\log_3 x}\right)\right).
$$
Hence, we get that for fixed $p$, $a$ and $b$, the number of such $n$ is 
$$
\frac{x}{p\varphi(p)^2(\log x)}(\log_2 x-\log_2 p) \left(1+O\left(\frac{1}{\log_3 x}\right)\right).
$$
Now we sum up over all $n(a)$ which, by definition, is the number of $b\in \{1,\ldots,p-1\}$ such that $(a,b)\in M(p)$, then over all the $a$ such that $n(a)>0$. 
Keeping in mind that 
$$
\sum_{1\le a\le p-1} n(a)=|M(p)|=
\alpha p^2+O(p),
$$
we obtain a contribution of 
$$
\frac{\alpha x}{p\log x} (\log_2 x-\log_2 p)\left(1+O\left(\frac{1}{\log_3 x}\right)\right)
\left(1+O\left(\frac{1}{p}\right)\right).
$$
Now we sum the 
latter expression up over all $p\in [z_1,y_1]$ and 
on using that $$1+O\left(\frac{1}{p}\right)=
1+O\left(\frac{1}{\log_3 x}\right)$$ in
that range and the fact that
$$
\sum_{p\le t} \frac{\log_2 p}{p}=\frac{1}{2} (\log_2 t)^2\left(1+O\left(\frac{1}{\log_2 t}\right)\right),
$$
we
get that the number of $r$ we are after is
\begin{equation}
\label{eq:final}
\alpha\left(\frac{x\log_2 x}{\log x}\sum_{z_1\le p\le y_1} \frac{1}{p}-\frac{x}{\log x}\sum_{z_1\le p\le y_1}\frac{\log_2 p}{p}\right)\left(1+O\left(\frac{1}{\log_3 x}\right)\right). 
\end{equation}
The first sum in \eqref{eq:final} above asymptotically equals 
$$
\log_2 y_1-\log_2 z_1+o(1)=\log_2 x\left(1+O\left(\frac{1}{\log_3 x}\right)\right).
$$
The second sum in \eqref{eq:final} is
$$
\frac{1}{2} \big((\log_2 y_1)^2-(\log_2 z_1)^2+O(\log_2 x)\big)  = \frac{(\log_2 x)^2}{2} \left(1+O\left(\frac{1}{(\log_3 x)^2}\right)\right).
$$
On putting everything together, the result is proved. 
\end{proof}
\section{Applications} 
\label{apps}
\subsection{Cyclotomic polynomials}

We define the {\it height} of a polynomial $f$ in $\mathbb Z[x]$, $h(f)$,  to be the
maximum of absolute value of the coefficients of $f$.
A polynomial of height one is said to be {\it flat}.\\
\indent The $n^{\text{th}}$ cyclotomic polynomial $\Phi_n$ is defined by
$$\Phi_n(x)=\prod_{1\le j\le n\atop (j,n)=1}(x-\zeta_n^j)=
\sum_{k=0}^{\varphi(n)}a_n(k)x^k,$$
 where $\varphi$ is Euler's totient function and $\zeta_n$ a primitive $n^{\text{th}}$ root of unity. 
 For a very readable introduction to the properties of coefficients of cyclotomic polynomials, the reader is referred to Thangadurai \cite{Thanga}.\\
\indent The coefficients $a_n(k)$ are integers that tend to be small. For example, for $n\le 104$ we have
$|a_n(k)|\le 1$, but $a_{105}(7)=-2$. Note that 105 is the smallest ternary integer.
It can be shown that if $|a_n(k)|>1$, then $n$ must have at least three distinct odd
prime factors (see \cite{Migotti}). 
For a more recent reproof see, e.g., 
Lam and Leung \cite{LL}.\\
\indent Gallot and Moree \cite{GM2} showed that the set
$\{a_n(k):0\le k\le \varphi(n)\}$ consists of a string of
consecutive integers in case $n$ is ternary.
Different proofs of this 
fact were given by Bachmann \cite{B4} and 
Bzd{\c e}ga \cite{BZ}. 
In all three 
papers \cite{B4,BZ,GM2} this was achieved by establishing that, in
case $n$ is ternary, $|a_n(k)-a_n(k-1)|\le 1$. Thus, neighboring coefficients differ by at most one. In 2014 Bzd{\c e}ga \cite{BZ2} went beyond
this and characterized all $k$ such that $|a_{pqr}(k)-a_{pqr}(k-1)|=1$
and determined the number of $k$'s for which this equality holds.
There are various papers
devoted to ternary cyclotomic polynomials, e.g. \cite{B1, B2, B3, Beiter, GM, GMW, Kaplan, ZZ1}.\\
\indent For a long time the main conjecture on ternary cyclotomic polynomials
was one made by Sister Marion Beiter 
in 1968.
\begin{conj}[Sister Beiter conjecture \cite{Beiter0}]
Let $p<q<r$ be primes.
The cyclotomic coefficient $a_{pqr}(k)$ 
satisfies
$|a_{pqr}(k)|\le (p+1)/2$. 
\end{conj}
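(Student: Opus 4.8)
The plan is to turn the coefficients $a_{pqr}(k)$ into an explicit signed count and then bound it. The starting point is the factorization $\Phi_{pqr}(x)=\Phi_{pq}(x^r)/\Phi_{pq}(x)$, valid because $r$ is a prime not dividing $pq$ and degree-consistent since $(r-1)(p-1)(q-1)=\varphi(pqr)$. Writing $\Phi_{pq}(x)=\sum_m c_m x^m$ with $c_m\in\{-1,0,1\}$ given by the Lam--Leung description \cite{LL}, and expanding
\[
\frac{1}{\Phi_{pq}(x)}=\frac{(1-x^p)(1-x^q)}{(1-x^{pq})(1-x)}=\sum_{\ell\ge 0} d_\ell x^\ell,
\]
one reads off $a_{pqr}(k)=\sum_{mr+\ell=k} c_m d_\ell$. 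Using the palindromic symmetry $a_{pqr}(k)=a_{pqr}(\varphi(pqr)-k)$, it suffices to treat $k\le\varphi(pqr)/2$.

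First I would reduce to finitely many cases per pair $(p,q)$ using Kaplan's observation \cite{Kaplan} that $a_{pqr}(k)$ depends on $r$ only through the residue $r\bmod pq$, which collapses the infinitely many primes $r$ into a bounded analysis. Next I would convert the convolution into a signed lattice-point count: since the support of $(c_m)$ is the Lam--Leung rectangle of $+1$-positions $m=ip+jq$ together with its complementary $-1$-positions, and $(d_\ell)$ is governed by the same two linear forms $ip+jq$, the coefficient $a_{pqr}(k)$ becomes the difference between the number of $+1$-positions and the number of $-1$-positions of $\Phi_{pq}$ that fall into a window determined by $k$ and $r\bmod pq$. A soft bound of order $p$ is then immediate, since each residue class modulo $p$ contributes $O(1)$ lattice points to the window.

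The hard part will be squeezing this count down to the sharp constant $(p+1)/2$: the naive lattice-point bound overcounts by nearly a factor of two, and recovering the factor $\tfrac12$ requires exploiting a fine near-cancellation. In any admissible window the $+1$- and $-1$-positions of $\Phi_{pq}$ are so tightly interleaved that at most about half of the $p$ residue classes modulo $p$ can produce a net surplus, and this interleaving is dictated by the Euclidean (continued-fraction) relation between $p$, $q$ and $r\bmod pq$. The real obstacle is to make this cancellation uniform in $q$ and in $r\bmod pq$ rather than merely for generic values, and I expect essentially the whole difficulty of the statement to reside in that uniform sharpening of the constant, the surrounding reductions being routine once the convolution formula is in hand.
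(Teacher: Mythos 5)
There is a fundamental problem here that no amount of technical refinement can repair: the statement you are trying to prove is \emph{false}, and the paper records it only as a (historical) conjecture, not as a theorem with a proof. As the paper itself notes, the bound $|a_{pqr}(k)|\le (p+1)/2$ was established by Sister Beiter for $p=3,5$ \cite{Beiter} and by Zhao and Zhang for $p=7$ \cite{ZZ1}, but Gallot and Moree \cite{GM} showed that for \emph{every} prime $p\ge 11$ there exist primes $q<r$ and an index $k$ with $|a_{pqr}(k)|>(p+1)/2$. Indeed, the corrected conjecture (Conjecture \ref{corrected}) replaces $(p+1)/2$ by $2p/3$, and that constant is sharp: it becomes false for any smaller ratio. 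Since $2p/3>(p+1)/2$ for all $p>3$, the ``fine near-cancellation'' you invoke in your final step --- that in every admissible window at most about half of the $p$ residue classes modulo $p$ can produce a net surplus --- provably fails for every $p\ge 11$; this is precisely the content of the Gallot--Moree counterexamples, which exploit suitable residues of $q$ and $r$ modulo $p$ to defeat exactly that interleaving.

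Your preparatory reductions are sound and standard in this literature: the factorization $\Phi_{pqr}(x)=\Phi_{pq}(x^r)/\Phi_{pq}(x)$, the Lam--Leung description of $\Phi_{pq}$, the convolution formula for $a_{pqr}(k)$, the palindromic symmetry, and Kaplan's observation that $a_{pqr}(k)$ depends on $r$ only through $r \bmod pq$. This machinery does yield bounds of the shape you describe, but the best that such counting arguments have produced unconditionally is Bachman's $3p/4$ \cite{B1} and the refined min-type bounds of Bachman and Bzd\c{e}ga quoted in Theorem \ref{GBmain}; squeezing below $2p/3$ uniformly is impossible. What \emph{can} be salvaged from your plan is exactly what the paper does: combine the Bzd\c{e}ga bound of Theorem \ref{GBmain} with the residue-class count of Proposition \ref{BBpcard} and the density result Theorem \ref{main} to show (Theorem \ref{cycchal}) that the corrected bound $2p/3$ holds for a fraction $\ge 25/27$ of ternary integers --- a density statement, not the pointwise conjecture you set out to prove.
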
 
\noindent Sister Beiter herself established her
conjecture for $p=3$ and $p=5$ \cite{Beiter}. 
Zhao and Zhang \cite{ZZ1} proved it
for $p=7$. However, for every $p\ge 11$ the 
conjecture is false as was shown by
Gallot and Moree \cite{GM}. They  
put forward the following conjecture.
\begin{conj}[Corrected Sister Beiter conjecture, Gallot and Moree \cite{GM}]
\label{corrected}
Let $p<q<r$ be primes.
The cyclotomic coefficient $a_{pqr}(k)$ 
satisfies
$|a_{pqr}(k)|\le 2p/3$. 
\end{conj}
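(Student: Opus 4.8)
The plan is to reduce the conjecture to a sharp, uniform estimate for a single combinatorial quantity attached to each prime $p$, and then to confront that estimate; I should say at the outset that the statement is an open \emph{conjecture}, so any honest plan must identify where it breaks down. Since $|a_{pqr}(k)|>1$ forces $n$ to have three distinct odd prime factors (Migotti \cite{Migotti}, reproved by Lam--Leung \cite{LL}), there is nothing to prove unless $n=pqr$ is ternary; so fix odd primes $3\le p<q<r$. As $a_{pqr}(k)$ depends on $r$ only modulo $pq$, and the extreme coefficients are governed, in the relevant range, by the residues $\bar q=q\bmod p$ and $\bar r=r\bmod p$ (this is the data underlying Theorem~\ref{main}), I would set $M_p(\bar q,\bar r):=\max_k|a_{pqr}(k)|$ and reduce the conjecture to the finite assertion
$$\max_{1\le \bar q,\bar r\le p-1} M_p(\bar q,\bar r)\le \frac{2p}{3}\qquad\text{for every prime } p\ge 3,$$
using the symmetries coming from $x\mapsto x^{-1}$ (palindromy) and the interchange of residues to cut down the pairs that must be examined.

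Next I would make $M_p$ explicit. The jump-one property proved in \cite{B4,BZ,GM2}, namely $|a_{pqr}(k)-a_{pqr}(k-1)|\le 1$, shows that the coefficient sequence is a staircase of consecutive integers, so its extreme value equals the total rise of that staircase. The Bachmann/Bzd{\c e}ga lattice description \cite{B4,BZ} rewrites this rise as a difference of counts of lattice points in two triangles whose shapes are dictated by $\bar q,\bar r$ modulo $p$. In this form $M_p(\bar q,\bar r)$ becomes a sawtooth-type sum, a main (area) term plus a lattice-point fluctuation, whose size is governed by the continued-fraction expansion of the relevant ratio modulo $p$.

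The decisive step is the uniform inequality $M_p\le 2p/3$. Estimating each lattice-point count by its area plus a boundary correction yields only Bzd{\c e}ga's bound $M_p<3p/4$, which is too weak by $p/12$; to reach $2p/3$ one must show that the fluctuation never conspires with the main term to overshoot in the worst residue class. \emph{This is the main obstacle, and it is exactly why the statement is an open conjecture.} The constant $2/3$ is sharp from below: Gallot and Moree \cite{GM} produce families with $M_p(\bar q,\bar r)=2p/3-O(1)$, so there is no slack with which to absorb error terms, and any successful argument must locate the extremal residues precisely and evaluate the fluctuation there to within $O(1)$ — a rigidity that the present lattice-point estimates do not supply.

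Absent such a rigidity result the fully uniform bound remains out of reach, and I would not claim to prove the conjecture as stated. What the analytic machinery of this paper \emph{does} allow is the density surrogate: one shows that the set of residue pairs $(\bar q,\bar r)$ with $M_p(\bar q,\bar r)>2p/3$ has density $\alpha\le 0.075$, and then Theorem~\ref{main} converts this into the statement that the conjectured inequality $|a_{pqr}(k)|\le 2p/3$ holds for a fraction $\ge 0.925$ of ternary $n\le x$ — which is the form in which the corrected Sister Beiter conjecture is actually advanced in this paper.
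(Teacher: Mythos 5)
You are right on the central point: this statement is an open conjecture, the paper contains no proof of it, and your refusal to claim one is the correct call. Your fallback is essentially the paper's own route to Theorem \ref{cycchal}: bound $|a_{pqr}(k)|$ by a quantity depending only on the residues of the inverses $q^*,r^*$ modulo $p$ (Theorem \ref{GBmain} and Corollary \ref{qstar}, i.e.\ Bzd\c{e}ga's bound $BB(j,k)$, which improves Bachman's $3/4$-bound from \cite{B1}), count the certified-good pairs via Proposition \ref{BBpcard} to get $N_{BB}(p)=\frac{25}{27}p^2+O(p)$, and feed $\alpha=25/27$ into Theorem \ref{main} to obtain density $\ge 25/27>0.925$. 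So as far as provable content goes, you and the paper coincide.

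Two inaccuracies in your write-up are worth fixing. First, a logical inversion: you assert that ``the set of residue pairs $(\bar q,\bar r)$ with $M_p(\bar q,\bar r)>2p/3$ has density $\alpha\le 0.075$.'' The paper proves no such thing, and if the conjecture is true that set is empty. What Proposition \ref{BBpcard} gives is that the set of pairs \emph{not certified} by the Bzd\c{e}ga bound (those with $BB(j,k)>2p/3$) has density $2/27\approx 0.074$; on those pairs nothing is known either way. The correct statement is that the certified set has density $25/27$, which is all Theorem \ref{cycchal} needs. Second, your quantity $M_p(\bar q,\bar r):=\max_k|a_{pqr}(k)|$ is not well defined as a function of the residues $(\bar q,\bar r)$ alone: the height of $\Phi_{pqr}$ genuinely depends on more than $q,r \bmod p$ (one should take a supremum over all admissible $q,r$ in the given classes), and the paper's reduction is phrased instead through the inverse residues $q^*,r^*\pmod p$, with $M(p)=\{(j^*,k^*):(j,k)\in BB(p)\}$ in the application of Theorem \ref{main}. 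Neither slip affects your conclusion, but the first one, as written, claims strictly more than is known.
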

\noindent This conjecture is sharp as it becomes false
if the
ratio  $2/3$ is replaced by
any smaller number \cite{GM}. 
It has been shown to hold if the ratio $2/3$ is 
replaced by $3/4$ \cite{B1}.
\subsection{Flat cyclotomic polynomials}
Cyclotomic polynomials $\Phi_n$ are called flat if
$h(\Phi_n)=1$. The main challenge here is to find all
$n$ such that $\Phi_n$ is flat. For contributions,
see \cite{B3,Elder,Kaplan,Kaplan2}.
In particular, Broadhurst made a far reaching conjecture
here, cf. \cite{Kaplan2}.
Kaplan \cite{Kaplan} found the following family of
cyclotomic polynomials.
\begin{Thm}[Kaplan \cite{Kaplan}]
\label{thm:Kaplan}
If $p<q$ are primes and $r\equiv \pm 1(\mmod{pq})$, 
then $\Phi_{pqr}$ is flat.
\end{Thm}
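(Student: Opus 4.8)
The plan is to start from the standard factorisation
\[
\Phi_{pqr}(x)=\frac{\Phi_{pq}(x^{r})}{\Phi_{pq}(x)},
\]
which is valid because $r\nmid pq$, and to use the hypothesis $r\equiv\pm1\,(\mmod pq)$ to carry out the division explicitly. It suffices to treat $r\equiv1\,(\mmod pq)$ in detail, the case $r\equiv-1$ being entirely analogous; set $r-1=tpq$. Writing $\Phi_{pq}(x)=\sum_{j}c_{j}x^{j}$ we have $\Phi_{pq}(x^{r})-\Phi_{pq}(x)=\sum_{j}c_{j}x^{j}\bigl(x^{(r-1)j}-1\bigr)$, and each factor $x^{(r-1)j}-1=x^{tpqj}-1$ is divisible by $x^{pq}-1$. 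Dividing by $\Phi_{pq}(x)$ then yields the clean shape
\[
\Phi_{pqr}(x)=1+G(x)H(x),\qquad G(x):=\frac{x^{pq}-1}{\Phi_{pq}(x)}=(x^{q}-1)(1+x+\cdots+x^{p-1}),
\]
where $H(x)=\sum_{j}c_{j}\sum_{\ell=0}^{tj-1}x^{\,j+pq\ell}$. Here $G$ is explicit: its coefficient is $-1$ at the exponents $0,\dots,p-1$ and $+1$ at $q,\dots,q+p-1$, and $0$ elsewhere.

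The next step feeds in the Lam--Leung description of $\Phi_{pq}$: every $c_{j}\in\{-1,0,1\}$, with the $+1$'s sitting on a product set $\{ip+jq\}$ and the $-1$'s on a translate. Since $\deg\Phi_{pq}=(p-1)(q-1)<pq$, each exponent $u$ decomposes uniquely as $u=\langle u\rangle+pq\cdot\mathrm{block}(u)$ with $0\le\langle u\rangle<pq$, so the coefficient of $H$ is the single value $h_{u}=c_{\langle u\rangle}$ when $\mathrm{block}(u)\le t\langle u\rangle-1$, and $h_{u}=0$ otherwise; in particular $h_{u}\in\{-1,0,1\}$. Factoring $G=(x^{q}-1)(1+\cdots+x^{p-1})$ reduces the whole problem to the length-$p$ window sums $\widetilde h_{m}:=\sum_{i=0}^{p-1}h_{m-i}$, because $[x^{m}](GH)=\widetilde h_{m-q}-\widetilde h_{m}$. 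Thus flatness is equivalent to the single inequality $|\widetilde h_{m-q}-\widetilde h_{m}|\le1$ for all $m$.

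The hard part is exactly this last inequality, and here a useful simplification guides the argument: the $pq$-periodic ``bulk'' contributes nothing. Indeed, replacing $h_{u}$ by its periodic model $c_{\langle u\rangle}$ produces a shifted window difference whose generating function on $\mathbb{Z}/pq$ is $G(x)\Phi_{pq}(x)=x^{pq}-1\equiv0\pmod{x^{pq}-1}$, so it vanishes identically. Consequently every nonzero coefficient of $\Phi_{pqr}$ must originate from the truncation boundary $\mathrm{block}(u)=t\langle u\rangle$, and the remaining task is to show that the length-$p$ window sums of this thin boundary layer jump by at most $1$ under a shift by $q$. I would finish this by combining the explicit Lam--Leung staircase for the support of the $c_{j}$ with the palindromic symmetry $a_{pqr}(k)=a_{pqr}\bigl((r-1)(p-1)(q-1)-k\bigr)$, which confines attention to $m$ below the midpoint and keeps the relevant blocks in a controlled range; the crux is the resulting lattice-point count showing that two boundary contributions of the same sign can never accumulate. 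The case $r\equiv-1\,(\mmod pq)$ runs the same way, with $x^{(r+1)j}-1$ and the palindromicity of $\Phi_{pq}$ replacing the steps above.
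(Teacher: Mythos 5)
Your algebraic setup is correct and cleanly done: the identity $\Phi_{pqr}(x)=\Phi_{pq}(x^{r})/\Phi_{pq}(x)$, the factorization $\Phi_{pqr}=1+GH$ with $G=(x^{q}-1)(1+x+\cdots+x^{p-1})$, the truncated-periodic description of the coefficients $h_{u}$ of $H$, the reduction of flatness to $|\widetilde h_{m-q}-\widetilde h_{m}|\le 1$, and the cancellation of the $pq$-periodic bulk (the cyclic convolution of $G$ with the periodic sequence $(c_{\langle u\rangle})$ is the coefficient sequence of $G\Phi_{pq}\equiv 0 \pmod{x^{pq}-1}$) are all valid. In effect you have re-derived what is known in the literature as Kaplan's lemma. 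Note that the paper you are checking against offers no proof at all: Theorem \ref{thm:Kaplan} is quoted from Kaplan's article, so the only meaningful comparison is with Kaplan's own argument, whose first half is structurally the same as yours.

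The problem is that your proposal stops exactly where the theorem actually lives. After the periodic bulk cancels, the coefficient $[x^{m}](GH)$ is a signed count of ``correction'' positions $u$ (those with $\mathrm{block}(u)\ge t\langle u\rangle$, each carrying weight $-c_{\langle u\rangle}$) lying in two windows of length $p$; a priori this count can be as large as $2p$ in absolute value, and the correction set is not a ``thin boundary layer'' but the full staircase region $\{\langle u\rangle \le \mathrm{block}(u)/t\}$, which inside a given window can contain many residues. The entire content of Kaplan's theorem is the verification that the Lam--Leung sign pattern of $\Phi_{pq}$ (the $+1$'s on $\{ip+jq\}$ and the $-1$'s on its translate), intersected with this staircase, never produces two surviving contributions of the same sign that fail to cancel against the other window. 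At precisely this point your text switches from proving to planning (``I would finish this by\ldots'', ``the crux is the resulting lattice-point count\ldots''): there is no identification of which lattice points lie in the windows, no case analysis on the Lam--Leung support, and no quantitative use of the congruence hypothesis beyond defining $t$. Since the conclusion is false for general $r$ coprime to $pq$, the omitted count is where the hypothesis $r\equiv\pm 1\ (\mmod{pq})$ must do decisive work, so this is a genuine gap and not a routine verification. A secondary gap: for $r\equiv -1\ (\mmod{pq})$ the expression $x^{rj}-x^{-j}$ involves negative exponents, so one must first invoke the palindromy of $\Phi_{pq}$ to rewrite $\Phi_{pq}(x^{r})$ before any division by $x^{pq}-1$ makes sense; ``entirely analogous'' hides a real (if manageable) rearrangement.
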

\noindent Elder \cite{Elder} conjectured that if $n$ has
five or more odd prime factors, then
$\Phi_n$ is not flat. It thus seems that flat polynomials are
quite sparse.
\subsection{Inverse cyclotomic polynomials}
We define $\Psi_n(x)=(x^n-1)/\Phi_n(x)$ to be the $n^{\text{th}}$ {\it inverse} cyclotomic polynomial.
Since $x^n-1=\prod_{d|n}\Phi_d(x)$, we find that $\Psi_n=\prod_{d|n,d<n}\Phi_d$.
Thus, $\Psi_n$ is of degree $n - \phi(n)$ and has integer coefficients $c_{n}(k)$ which, like those of
the cyclotomic polynomials, tend to be small. For example
$\Psi_n$
has coefficients that are $\le 1$ in absolute value 
for $n\le 560$. Moreover, $c_{p}(k)\in \{-1,1\}$ and $c_{pq}(k) \in \{-1, 0,1\}$ 
(compare \cite[Lemma 5]{Mor}). 

We now recall two results on heights of cyclotomic and inverse cyclotomic polynomials 
due to Sister Beiter \cite{Beiter} and Moree \cite{Mor}. 
By the following result and the Prime Number Theorem for Arithmetic Progressions (a weaker
form of Theorem \ref{SW}), one infers
that the analogues of both the original 
and the corrected Sister Beiter conjecture for the ternary (inverse) cyclotomic polynomials are
true for $p=3$ and false for every $p\ge 5$.
\begin{Thm}[Moree  \cite{Mor}]
\label{main2}
Let $p<q<r$ be odd primes. Then
$h(\Psi_n)=p-1$ if and only if 
\begin{equation}
\label{bloebie}
q\equiv r\equiv \pm 1({\rm mod~}p)\text{~and~}r<{(p-1)\over (p-2)}(q-1).
\end{equation}
In the remaining cases, $h(\Psi_n)<p-1$.
\end{Thm}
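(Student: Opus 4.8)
The plan is to express $\Psi_{pqr}$ through the binary cyclotomic polynomial $\Phi_{pr}$, whose coefficients are completely explicit by the theorem of Lam and Leung \cite{LL}, and then reduce the height computation to a counting problem along an arithmetic progression. First I would record the factorisation
\[
\Psi_{pqr}(x)=\Phi_{pr}(x)\,(x^{qr}-1)\sum_{j=0}^{p-1}x^{jq},
\]
which comes from $\Psi_{pqr}=(x^{pqr}-1)/\Phi_{pqr}$ and the M\"obius product for $\Phi_{pqr}$, once one groups $(x^{pr}-1)(x-1)/\bigl((x^p-1)(x^r-1)\bigr)=\Phi_{pr}$ and uses $(x^{pq}-1)/(x^q-1)=\sum_{j=0}^{p-1}x^{jq}$. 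Writing $\phi_m$ for the $m$-th coefficient of $\Phi_{pr}$ (so $\phi_m\in\{-1,0,1\}$, and $\phi_m=0$ unless $0\le m\le(p-1)(r-1)$), this yields $c_{pqr}(k)=\Sigma(k-qr)-\Sigma(k)$, where $\Sigma(t)=\sum_{j=0}^{p-1}\phi_{t-jq}$ is a signed count of coefficients of $\Phi_{pr}$ taken along a progression of length $p$ and common difference $q$. Since $qr>(p-1)(r-1)$, the two shifted copies $\Sigma(k-qr)$ and $\Sigma(k)$ draw on disjoint blocks of the index $j$, and I would use this to reduce the height to the single quantity $\max_t|\Sigma(t)|$ (the range where both copies contribute needs a separate but routine check).

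The upper bound is then the easy half. By the Chinese Remainder Theorem $\phi_m$ is governed by the pair $(\alpha,\beta)$ with $m\equiv\alpha p+\beta r\pmod{pr}$, $0\le\alpha\le r-1$, $0\le\beta\le p-1$: it equals $+1$ on the box $[0,\rho]\times[0,\sigma]$, $-1$ on the complementary box $[\rho+1,r-1]\times[\sigma+1,p-1]$, and $0$ otherwise, where $\rho p+\sigma r=(p-1)(r-1)$. As $j$ runs over $0,\dots,p-1$ the coordinate $\beta$ of $t-jq$ runs over all residues modulo $p$ exactly once (the step $-q\,r^{-1}$ is a unit), so at most $\sigma+1$ of the $p$ terms are $+1$ and at most $p-1-\sigma$ are $-1$. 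Hence $|\Sigma(t)|\le\max(\sigma+1,\,p-1-\sigma)\le p-1$, giving $h(\Psi_{pqr})\le p-1$. Moreover $\sigma\equiv r^{-1}-1\pmod p$, so equality $\max(\sigma+1,p-1-\sigma)=p-1$ forces $\sigma\in\{0,p-2\}$, that is $r\equiv\pm1\pmod p$; this is the first of the two congruence conditions.

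For the characterisation of equality I would analyse when the extreme value $\pm(p-1)$ is actually attained. Assume $r\equiv\pm1\pmod p$, and take the representative case $\sigma=p-2$ (the case $\sigma=0$ is entirely analogous, fitting the progression into the complementary column $[\rho+1,r-1]$ and tracking the $-1$'s, and gives the same inequality). Then $\Sigma(t)=p-1$ forces all $p$ points $t-jq$ to land in the column $\alpha\le\rho$: the $p-1$ of them with $\beta\le p-2$ are automatically $+1$, and the one with $\beta=p-1$ then contributes $0$. In the $(\alpha,\beta)$-lattice a step of $-q$ is the vector $(w,-1)$ with $w=(r-q)/p$ \emph{precisely} when $q\equiv-1\pmod p$ as well; here the monotone descent of $\beta$ (no wrap-around) turns the problem into fitting the genuine progression $\{\,\ell w:0\le\ell\le p-1\,\}$ inside $[0,\rho]$. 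This is possible iff $(p-1)w\le\rho$, and substituting $w=(r-q)/p$, $\rho=(r+1-p)/p$ gives exactly $(p-2)r\le(p-1)(q-1)$, i.e. $r\le\frac{p-1}{p-2}(q-1)$. If instead $q\not\equiv-1\pmod p$, then $\beta$ wraps around as $j$ increases, forcing jumps in $\alpha$ of size $\asymp r/p$ that cannot be confined to the short interval $[0,\rho]$; establishing this rigorously is the main obstacle, as it requires a clustering (three-gap type) argument for the progression of $\alpha$-coordinates modulo $r$, and it is what makes $q\equiv r\equiv\pm1\pmod p$ with matching sign necessary.

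Finally I would reconcile the strict inequality with the boundary of the estimate: $r=\frac{p-1}{p-2}(q-1)$ can hold only when $\frac{(p-1)(q-1)}{p-2}$ is an integer satisfying the congruence constraints, and a short parity/residue computation (as for $p=3$, where $\frac{p-1}{p-2}(q-1)=2(q-1)$ is always even and hence never an admissible prime $r$) shows no admissible prime lies in the gap between $r<\frac{p-1}{p-2}(q-1)$ and $r\le\frac{p-1}{p-2}(q-1)$. Combining the upper bound with the attainment analysis then gives $h(\Psi_{pqr})=p-1$ exactly under \eqref{bloebie} and $h(\Psi_{pqr})<p-1$ in all remaining cases.
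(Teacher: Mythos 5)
You should know at the outset that this paper contains no proof of Theorem \ref{main2}: the result is imported verbatim from Moree \cite{Mor}, so the only proof to compare with is the one there. Your overall strategy — factor $\Psi_{pqr}$ through a binary cyclotomic polynomial, then analyse its Lam--Leung coefficient pattern \cite{LL} along a length-$p$ arithmetic progression — is the same circle of ideas as in \cite{Mor} (which uses the transposed identity $\Psi_{pqr}(x)=\Phi_{pq}(x)\Psi_{pq}(x^{r})$, stepping by $r$ through the coefficients of $\Phi_{pq}$). Your identity $\Psi_{pqr}(x)=\Phi_{pr}(x)(x^{qr}-1)\sum_{j=0}^{p-1}x^{jq}$ is correct, as are the formula $c_{pqr}(k)=\Sigma(k-qr)-\Sigma(k)$, the box description of $\phi_m$, the bound $|\Sigma(t)|\le\max(\sigma+1,p-1-\sigma)$, the equivalence $\sigma\in\{0,p-2\}\Leftrightarrow r\equiv\pm1\pmod p$, the computation $(p-1)w\le\rho\Leftrightarrow(p-2)r\le(p-1)(q-1)$, and the parity argument disposing of the boundary case. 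Nevertheless the proposal has two genuine gaps.

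First, the reduction of $h(\Psi_{pqr})$ to $\max_t|\Sigma(t)|$ fails, and the overlap is not a ``routine check.'' Disjointness of the $j$-blocks $\{0,\dots,p-1\}$ and $\{r,\dots,r+p-1\}$ (equivalently, your observation $qr>(p-1)(r-1)$, which holds per fixed $j$) is not disjointness of the supports of $\Sigma(\cdot)$ and $\Sigma(\cdot-qr)$: the supports are disjoint only when $qr$ exceeds $\deg\bigl(\Phi_{pr}\sum_j x^{jq}\bigr)=(p-1)(q+r-1)$, and this fails, e.g. for $(p,q,r)=(11,13,17)$, where $qr=221<290=(p-1)(q+r-1)$. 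On the overlap range your per-$j$ disjointness and the $\beta$-counting give only $|c_{pqr}(k)|\le(\sigma+1)+(p-1-\sigma)=p$, so even the unconditional bound $h(\Psi_{pqr})\le p-1$ asserted by the theorem is not established there; worse, a coefficient equal to $p-1$ could a priori arise from a mixed configuration in which $\Sigma(k-qr)$ and $-\Sigma(k)$ are both moderately positive and neither copy is extremal, a possibility your attainment analysis never examines. Closing this requires the same kind of box-versus-progression case analysis as your main argument, now with $2p$ points, not a routine remark.

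Second — and you flag this yourself — the necessity of the matching congruence on $q$ is missing. What you actually establish is: extremality of $\Sigma$ forces $r\equiv\pm1\pmod p$, and \emph{if in addition} $q\equiv r\pmod p$, then the extremal configuration exists precisely when $r\le\frac{p-1}{p-2}(q-1)$. The remaining implication — that when $q\not\equiv r\pmod p$ (wrap-around of the $\beta$-coordinate) no coefficient can reach $p-1$ — is exactly the content-heavy half of the theorem, and you defer it to an unproven ``three-gap/clustering'' argument. Until that is supplied, the proposal proves only the ``if'' direction of \eqref{bloebie} (which does go through, since the witness $t=(p-1)r$ satisfies $t+qr>(p-1)(q+r-1)$, so the second copy cannot cancel it), but neither the ``only if'' direction nor the blanket inequality $h(\Psi_{pqr})\le p-1$.
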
 
We say that a ternary
cyclotomic polynomial $\Psi_n$ is {\it coefficient optimal} if $h(\Psi_n)=P(n)-1$,
where $P(n)$ denote the smallest prime factor of $n$. Thus, a ternary integer $n=pqr$ is 
coefficient optimal if and only if $q$ and $r$ satisfy (\ref{bloebie}).
\subsection{Analytic results}
\label{sec:analytic}
\subsubsection{An analytic result related to
ternary inverse cyclotomic coefficients}
\label{subsection7.4.1}
On combining Theorem 
\ref{main2} with Theorem \ref{thm:assT}, the following 
result is obtained.
\begin{Thm} 
\label{co}
The number $N_{CO}(x)$ of ternary $n=pqr\le x$ such that $\Psi_n$ is
coefficient optimal satisfies
$$
N_{CO}(x)=C_1\frac{x}{(\log x)^2} +O\left(\frac{x\log\log x}{(\log x)^3}\right),
$$
with $C_1$ as in \eqref{logsum}.
\end{Thm}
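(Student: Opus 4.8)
The plan is to show that, for ternary $n$, coefficient optimality of $\Psi_n$ is \emph{verbatim} the membership condition for the set $\mathcal{T}(x)$ of Theorem~\ref{thm:assT}; then $N_{CO}(x)$ and $|\mathcal{T}(x)|$ count exactly the same integers, and the asymptotic is inherited with no further analytic work.

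First I would unwind the definition of coefficient optimality. For a ternary integer $n=pqr$ with $p<q<r$ the smallest prime factor is $P(n)=p$, so by definition $\Psi_n$ is coefficient optimal precisely when $h(\Psi_n)=p-1$. Next I would invoke the classification in Theorem~\ref{main2}: for odd primes $p<q<r$ one has $h(\Psi_n)=p-1$ if and only if condition~\eqref{bloebie} holds, that is,
$$
q\equiv r\equiv \pm 1\,(\mmod p)\quad\text{and}\quad r<\left(\frac{p-1}{p-2}\right)(q-1),
$$
while $h(\Psi_n)<p-1$ in every remaining case. Combining these two observations, a ternary $n=pqr\le x$ is coefficient optimal exactly when \eqref{bloebie} is satisfied.

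Finally I would compare \eqref{bloebie} with the defining conditions of
$$
\mathcal{T}(x)=\left\{pqr\le x:\ 3\le p<q<r<\left(\frac{p-1}{p-2}\right)(q-1),\ r\equiv q\equiv \pm 1\,(\mmod p)\right\}.
$$
The chain $p<q<r<\left(\frac{p-1}{p-2}\right)(q-1)$ carries exactly the inequality of \eqref{bloebie} (the constraint $q<r$ being already forced by $p<q<r$), and the congruence $r\equiv q\equiv \pm 1\,(\mmod p)$ is identical to $q\equiv r\equiv \pm 1\,(\mmod p)$. Hence the integers enumerated by $N_{CO}(x)$ are precisely the elements of $\mathcal{T}(x)$, so $N_{CO}(x)=|\mathcal{T}(x)|$, and Theorem~\ref{thm:assT} immediately yields
$$
N_{CO}(x)=C_1\frac{x}{(\log x)^2}+O\left(\frac{x\log\log x}{(\log x)^3}\right),
$$
with $C_1$ as in \eqref{logsum}.

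There is no genuine obstacle here: the entire difficulty is already packaged inside Theorem~\ref{thm:assT}. The only point that warrants an explicit sentence is checking that the two set descriptions coincide \emph{as written}, namely that $q\equiv r\equiv \pm 1\,(\mmod p)$ is to be read as $q\equiv r\,(\mmod p)$ with common residue $\pm 1$ (the same sign for both $q$ and $r$, consistent with the conditions $q\equiv\pm1$ and $r\equiv q$ used inside the proof of Theorem~\ref{thm:assT}), and that the endpoint $r=\left(\frac{p-1}{p-2}\right)(q-1)$ is excluded by the strict inequality in both \eqref{bloebie} and $\mathcal{T}(x)$, so no boundary discrepancy can arise.
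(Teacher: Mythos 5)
Your proof is correct and is exactly the paper's argument: the paper derives Theorem~\ref{co} precisely by combining the characterization in Theorem~\ref{main2} with the count in Theorem~\ref{thm:assT}, observing (as you do) that coefficient optimality of $\Psi_{pqr}$ is verbatim the membership condition defining $\mathcal{T}(x)$. Your explicit check that the two set descriptions coincide, including the sign convention in $q\equiv r\equiv\pm1\,(\mmod p)$ and the strict inequality at the endpoint, is a careful spelling-out of what the paper leaves implicit.
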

\begin{cor} 
We have
$${N_{CO}(x)\over N_T(x)}\sim {2C_1\over (\log x)(\log \log x)^2}.$$
In particular, $\Psi_n$ is not coefficient optimal for almost all ternary $n$.
\end{cor}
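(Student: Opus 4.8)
The plan is to divide the asymptotic formula for $N_{CO}(x)$ supplied by Theorem~\ref{co} by the asymptotic for $N_T(x)$ recorded in \eqref{moreprecise}, and to simplify. First I would rewrite the numerator in factored form, pulling the main term out of the error estimate:
$$
N_{CO}(x)=C_1\frac{x}{(\log x)^2}\left(1+O\left(\frac{\log\log x}{\log x}\right)\right).
$$
Likewise, I would note that the bracketed factor in \eqref{moreprecise} is $1+o(1)$, so that
$$
N_T(x)=\frac{x(\log\log x)^2}{2\log x}\,(1+o(1)).
$$

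Next I would form the quotient. The powers of $x$ cancel, one power of $\log x$ cancels, and the numerical constants combine to produce the factor $2C_1$; explicitly,
$$
\frac{N_{CO}(x)}{N_T(x)}=\frac{2C_1}{(\log x)(\log\log x)^2}\cdot\frac{1+O\!\left(\frac{\log\log x}{\log x}\right)}{1+o(1)}.
$$
Since the trailing fraction tends to $1$ as $x\to\infty$, the claimed asymptotic equivalence
$$
\frac{N_{CO}(x)}{N_T(x)}\sim\frac{2C_1}{(\log x)(\log\log x)^2}
$$
follows immediately.

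Finally, for the concluding sentence of the statement I would observe that the right-hand side above tends to $0$, so $N_{CO}(x)=o(N_T(x))$. This says precisely that the ternary integers $n=pqr\le x$ for which $\Psi_n$ is coefficient optimal form a subset of density zero among all ternary integers up to $x$, i.e.\ $\Psi_n$ fails to be coefficient optimal for almost all ternary $n$.

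I do not anticipate any genuine obstacle: the corollary is a direct division of two previously established asymptotics, and the only point requiring attention is confirming that the error terms from Theorem~\ref{co} and from \eqref{moreprecise} are each absorbed into the $1+o(1)$ factor in the denominator of the quotient, which they plainly are since both are of smaller order than the respective main terms.
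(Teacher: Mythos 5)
Your proposal is correct and follows exactly the paper's route: the paper's proof is the one-line instruction to combine Corollary \ref{cor1} (the asymptotic \eqref{moreprecise} for $N_T(x)$) with Theorem \ref{co}, which is precisely the division of asymptotics you carry out. Your write-up merely makes explicit the absorption of the two error terms into a $1+o(1)$ factor, which is the only point of care and which you handle correctly.
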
 

\begin{proof}
Combine Corollary \ref{cor1} and Theorem \ref{co}. 
\end{proof}
\subsubsection{Flatness}
\label{subsection7.4.2}
On combining Theorem 
\ref{thm::mod_pq} and Theorem \ref{thm:Kaplan}, the following result is obtained.
\begin{Thm}
\label{thm:flatness}
Let $F(x)$ denote the number of ternary $n\le x$ such
that $\Phi_n$ is flat. Then
$$
F(x)\ge (2C_2+o(1))\frac{x}{\log x},
$$
with $C_2$ as in \eqref{ceetwee}.
\end{Thm}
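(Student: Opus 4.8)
The plan is to use Kaplan's sufficient condition for flatness (Theorem~\ref{thm:Kaplan}) to exhibit a large family of flat ternary cyclotomic polynomials, and then to count that family using Theorem~\ref{thm::mod_pq}. By Theorem~\ref{thm:Kaplan}, whenever $p<q<r$ are primes with $r\equiv 1\ (\mmod{pq})$ or $r\equiv -1\ (\mmod{pq})$, the polynomial $\Phi_{pqr}$ is flat. Hence every element of $\cT_1(x)\cup\cT_{-1}(x)$ is a ternary integer $n\le x$ for which $\Phi_n$ is flat (note that for $n\in\cT_{\pm1}(x)$ the prime $r$ exceeds $q$, so $n$ is genuinely a product of three distinct primes), which immediately yields $F(x)\ge |\cT_1(x)\cup \cT_{-1}(x)|$.

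First I would check that the sets $\cT_1(x)$ and $\cT_{-1}(x)$ are disjoint. Indeed, if some $pqr$ belonged to both, then $r\equiv 1$ and $r\equiv -1$ modulo $pq$, forcing $pq\mid 2$; but $p,q\ge 3$ gives $pq\ge 15$, a contradiction. Therefore $|\cT_1(x)\cup\cT_{-1}(x)|=|\cT_1(x)|+|\cT_{-1}(x)|$.

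Next I would apply Theorem~\ref{thm::mod_pq} twice, once with $a=1$ and once with $a=-1$ (both admissible nonzero integers). Each application gives
$$
|\cT_{1}(x)|=|\cT_{-1}(x)|=C_2\frac{x}{\log x}+O\!\left(\frac{x\log\log x}{(\log x)^2}\right),
$$
and summing the two estimates yields
$$
F(x)\ge |\cT_1(x)|+|\cT_{-1}(x)|=2C_2\frac{x}{\log x}+O\!\left(\frac{x\log\log x}{(\log x)^2}\right)=(2C_2+o(1))\frac{x}{\log x},
$$
as claimed.

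I do not expect any genuine obstacle in the argument itself, since both ingredients are already in hand. The substantive limitation—and the reason the conclusion is only a lower bound—is that Kaplan's condition $r\equiv\pm1\ (\mmod{pq})$ is sufficient but not necessary for the flatness of $\Phi_{pqr}$. To upgrade this to an asymptotic equality one would need either a complete characterization of the flat ternary cyclotomic polynomials or at least a proof that the remaining flat cases contribute negligibly to the count, and neither follows from the results assembled here.
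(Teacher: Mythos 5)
Your proposal is correct and follows exactly the route the paper takes: the paper obtains Theorem~\ref{thm:flatness} precisely by combining Kaplan's criterion (Theorem~\ref{thm:Kaplan}) with Theorem~\ref{thm::mod_pq} applied to $a=1$ and $a=-1$. The details you supply beyond the paper's one-line derivation (disjointness of $\cT_1(x)$ and $\cT_{-1}(x)$, and the remark that the bound is only a lower bound because Kaplan's condition is sufficient but not necessary) are accurate fillings-in of what the paper leaves implicit.
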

\subsubsection{The corrected Sister Beiter conjecture}
\label{subsection7.4.3}
The next result provides some evidence towards
the corrected Sister Beiter conjecture.
\begin{Thm}
\label{cycchal}
The number $N_{CB}(x)$ of ternary $n\le x$ such that $h(\Phi_n)\le 2P(n)/3$ satisfies
$$N_{CB}(x)\ge \Big({25\over 27}+o(1)\Big){x(\log \log x)^2\over 2\log x}.$$
\end{Thm}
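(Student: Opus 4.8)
The plan is to derive Theorem~\ref{cycchal} from Theorem~\ref{main}. For this I would, for each odd prime $p$, select the set $M(p)$ of residue pairs $(a,b)$ with $1\le a,b\le p-1$ on which one can \emph{provably} certify the corrected Sister Beiter bound, i.e.\ pairs such that $q\equiv a$ and $r\equiv b\ (\mathrm{mod}\ p)$ forces $h(\Phi_{pqr})\le 2p/3$ for all sufficiently large primes $q,r>p$. The point of the argument is then purely quantitative: I must show that the density of such certifiable pairs is at least $25/27$, so that Theorem~\ref{main} applies with $\alpha=25/27$ and every $n=pqr$ counted by $\mathcal T_M(x)$ satisfies $h(\Phi_n)\le 2p/3\le 2P(n)/3$, whence $N_{CB}(x)\ge \mathcal T_M(x)$.

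The input from the theory of ternary cyclotomic coefficients is the fact that, for $q,r>p$ large, the height $h(\Phi_{pqr})$ is controlled through the residues $q\bmod p$ and $r\bmod p$ alone: the explicit descriptions of the coefficients $a_{pqr}(k)$ in \cite{GM,BZ,B4} together with Bachmann's bound \cite{B1} (which already yields $h(\Phi_{pqr})\le 3p/4$ in all cases) furnish an explicit upper bound $h(\Phi_{pqr})\le U_p(q\bmod p,\,r\bmod p)$. I would first record the symmetries of $U_p$, namely invariance under $(a,b)\mapsto(b,a)$ (swapping $q$ and $r$) and under the diagonal scaling $(a,b)\mapsto(ca,cb)$ for $c\in(\mathbb Z/p)^\ast$, which reduce $U_p$ to a function of the single ratio parameter $\rho\equiv r q^{-1}\ (\mathrm{mod}\ p)$, taken up to $\rho\leftrightarrow\rho^{-1}$. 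One then sets
\[
M(p)=\bigl\{(a,b):\,1\le a,b\le p-1,\ U_p(a,b)\le 2p/3\bigr\}.
\]

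The combinatorial heart is to show that the complementary set, where the available bound exceeds $2p/3$, has cardinality $\tfrac{2}{27}p^2+O(p)$. After the reduction to $\theta=\rho/p\in(0,1)$, this amounts to proving that the normalized bound $U_p(\theta)/p$ surpasses $2/3$ only on a set of measure $2/27$, the extremal values (approaching $3/4$) being attained near the ratios that realize the sharpness constructions of \cite{GM}; these contribute two short bad intervals of total length $2/27$. Since each admissible $\rho$ yields the $p-1$ pairs $(a,\rho a\bmod p)$, the density of good ratios equals that of good pairs, giving $|M(p)|=\tfrac{25}{27}p^2+O(p)$. As $0<\tfrac{25}{27}<1$, Theorem~\ref{main} then yields
\[
N_{CB}(x)\ \ge\ \mathcal T_M(x)\ =\ \Bigl(\tfrac{25}{27}+o(1)\Bigr)\,\frac{x(\log\log x)^2}{2\log x},
\]
as claimed; the small-$q$, small-$r$ ranges excluded when certifying $U_p$ are exactly those shown negligible in the proof of Theorem~\ref{main}.

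The step I expect to be the main obstacle is precisely this density computation: extracting from the coefficient formulas a clean residue-only bound $U_p$, verifying that it holds \emph{uniformly} for all sufficiently large $q,r$ (so the finitely many exceptional small prime pairs can be discarded), and pinning the measure of $\{U_p>2p/3\}$ to exactly $\tfrac{2}{27}p^2+O(p)$. The boundary residues, where the normalized bound equals $2/3$, form a set of size $O(p)$ and are harmlessly absorbed into the error term; one should nonetheless check that passing, if necessary, to a subset of $M(p)$ of cardinality exactly $\tfrac{25}{27}p^2+O(p)$ leaves the hypotheses of Theorem~\ref{main} intact.
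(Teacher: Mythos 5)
Your overall architecture is exactly the paper's: take the residue-only coefficient bounds, define $M(p)$ as the set of certified residue pairs, show $|M(p)|=\tfrac{25}{27}p^2+O(p)$, and feed this into Theorem~\ref{main} with $\alpha=25/27$ (the paper does this via Theorem~\ref{GBmain}, Corollary~\ref{qstar} and Proposition~\ref{BBpcard}). But there is a genuine gap in your density computation. You claim that the bound $U_p$ is invariant under the diagonal scaling $(a,b)\mapsto(ca,cb)$, $c\in(\mathbb Z/p)^\ast$, so that everything reduces to the single ratio parameter $\rho\equiv rq^{-1}\ (\mmod\ p)$ and the bad set becomes a union of short intervals of total measure $2/27$. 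This symmetry is false for the bounds actually available. Bzd\c{e}ga's bound (Theorem~\ref{GBmain}) depends on $a=\min(q^*,r^*,p-q^*,p-r^*)$ and on $d$ with $adqr\equiv 1\ (\mmod\ p)$, and these are not functions of the ratio alone: for $p=11$ the pair $(j,k)=(1,10)$ gives $\alpha=1$, $\delta_1=1$, $BB=3\le 2p/3$, while its scaling by $3$, namely $(3,8)$, gives $\alpha=3$, $\delta_1=3$, $BB=8>2p/3$. So membership in your $M(p)$ is not scaling-invariant, the reduction to a one-dimensional measure computation collapses, and with it the claimed identification of the bad set as two intervals of length $2/27$. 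The paper's Proposition~\ref{BBpcard} instead performs a genuinely two-dimensional count: it partitions the square $1\le j,k\le p-1$ into regions where $BB(j,k)=\min(2\alpha+\delta_1,p-\delta_1)$ is given by a single linear expression, compares each with $2p/3$, and sums the resulting lattice-point counts, obtaining an exact quadratic polynomial in $p$ (depending on $p$ modulo $9$) with leading coefficient $25/27$.

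A secondary point: you invoke Bachman's bound \cite{B1} as the main certifying input, but that bound only certifies a fraction $8/9+O(1/p)$ of pairs (this is the paper's $N_{GB}(p)$ and its remark following Corollary~\ref{cor2}); to reach $25/27$ you must use Bzd\c{e}ga's sharper bound from \cite{BZ}, i.e.\ the quantity $BB(j,k)$ above. Your remaining steps -- passing from certified pairs to $M(p)$ via inversion of residues, absorbing boundary pairs of size $O(p)$ into the error, and concluding $N_{CB}(x)\ge \mathcal T_M(x)$ from Theorem~\ref{main} -- are sound and match the paper.
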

\begin{cor}
\label{cor2}
The relative density of ternary integers for which
the correct Sister Beiter conjecture holds true is
at least $0.925$.
\end{cor}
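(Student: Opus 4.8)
The plan is to read off the claimed relative density directly as the quotient of the two counting functions $N_{CB}(x)$ and $N_T(x)$, both of which have already been pinned down asymptotically earlier in the paper. By definition, the relative density of the ternary integers satisfying $h(\Phi_n)\le 2P(n)/3$ is
$$\liminf_{x\to\infty}\frac{N_{CB}(x)}{N_T(x)};$$
I work with a $\liminf$ rather than a genuine limit because Theorem~\ref{cycchal} furnishes only a lower bound on the numerator.

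First I would record the denominator asymptotic from Corollary~\ref{cor1} (equivalently from \eqref{moreprecise}), namely
$$N_T(x)=\frac{x(\log\log x)^2}{2\log x}\bigl(1+o(1)\bigr),$$
together with the numerator bound from Theorem~\ref{cycchal},
$$N_{CB}(x)\ge\Bigl(\frac{25}{27}+o(1)\Bigr)\frac{x(\log\log x)^2}{2\log x}.$$
The key observation is that the common factor $x(\log\log x)^2/(2\log x)$ cancels upon forming the quotient, leaving
$$\frac{N_{CB}(x)}{N_T(x)}\ge\frac{\tfrac{25}{27}+o(1)}{1+o(1)}.$$

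Finally I would let $x\to\infty$ and conclude that the $\liminf$ is at least $25/27$. Since $25/27=0.9259\ldots\ge 0.925$, the asserted lower bound on the relative density follows at once. There is no genuine obstacle here, as the entire analytic content has been absorbed into Theorem~\ref{cycchal} and Corollary~\ref{cor1}; the only points requiring mild care are the correct handling of the $o(1)$ terms under division, so that the ratio of the two $o(1)$-corrected main terms indeed tends to $25/27$, and the purely numerical verification that $25/27$ exceeds the threshold $0.925$.
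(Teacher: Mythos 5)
Your proposal is correct and matches the paper's (implicit) argument exactly: the corollary is intended to follow immediately by dividing the lower bound of Theorem~\ref{cycchal} by the asymptotic for $N_T(x)$ in \eqref{moreprecise} (Corollary~\ref{cor1}), cancelling the common factor $x(\log\log x)^2/(2\log x)$, and noting $25/27=0.9259\ldots>0.925$. Your careful use of a $\liminf$ and the handling of the $o(1)$ terms are exactly the right finishing touches.
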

The proof of Theorem \ref{cycchal} makes use of the following estimate due to Bzd\c{e}ga \cite{BZ}. For
completeness, we also consider what would happen if one would use an older estimate (2003)
due to Bachman \cite{B1}. In that case we obtain 
Theorem \ref{cycchal} and Corollary \ref{cor2}  with $25/27$ replaced by $8/9$ and
$0.925$ by $0.888$, respectively.
\begin{Thm} 
\label{GBmain}
Let $3\le p<q<r$ be primes. Let $q^*$ and $r^*$ be inverses
of $q$ and $r$ modulo $p$, respectively that satisfy $1\le q^*,r^*\le p-1$. Set
$a=\min(q^*,r^*,p-q^*,p-r^*)$
and let $1\le d\le p-1$ be defined by the relation
$adqr\equiv 1(\mmod p)$. Then we have (G. Bachman)
$$-\min\Big({p-1\over 2}+a,d\Big)\le a_{pqr}(k)\le \min\Big({p-1\over 2}+a,p-d\Big),$$
and (B. Bzd\c{e}ga)
$$-\min\Big(p+2a-d,d\Big)\le a_{pqr}(k)\le \min\Big(2a+d,p-d\Big).$$
\end{Thm}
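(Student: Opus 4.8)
The plan is to reduce both bounds to the fully understood binary case and then to read the coefficients off an explicit power-series expansion, following the strategies of Bachman \cite{B1} and Bzd\c{e}ga \cite{BZ}. The starting point is the factorization $\Phi_{pqr}(x)=\Phi_{pq}(x^r)/\Phi_{pq}(x)$, which holds because $r$ is a prime not dividing $pq$; note that the degrees match, since $r(p-1)(q-1)-(p-1)(q-1)=\varphi(pqr)$. I would then invoke the Lam--Leung description (see \cite{LL}) of $\Phi_{pq}$: its coefficients lie in $\{-1,0,1\}$, and the positions of the nonzero coefficients form two explicit blocks governed by the inverses of $p$ modulo $q$ and of $q$ modulo $p$; the inverse cyclotomic polynomial $\Psi_{pq}(x)=(x^{pq}-1)/\Phi_{pq}(x)$ is flat in the same way. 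Writing $\Phi_{pq}(x)^{-1}=-\Psi_{pq}(x)\sum_{j\ge 0}x^{jpq}$ and multiplying by $\Phi_{pq}(x^r)=\sum_{k'}c_{k'}x^{rk'}$, I obtain, for each $k$ with $0\le k\le\varphi(pqr)$, the closed expression
$$a_{pqr}(k)=-\sum_{rk'+\ell+jpq=k}c_{k'}\,d_\ell,$$
where $d_\ell\in\{-1,0,1\}$ is the coefficient of $x^\ell$ in $\Psi_{pq}$ and $j\ge 0$. Thus each coefficient is a signed count of admissible triples $(k',\ell,j)$.

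The next step is to translate the membership conditions defining $c_{k'}\neq0$ and $d_\ell\neq0$ into congruences modulo $p$. This is where the invariants of the theorem appear. The Lam--Leung blocks are periodic with steps that, after the dilation $x\mapsto x^r$ and reduction of the relation $rk'+\ell+jpq=k$ modulo $p$, force $k'$ and $\ell$ to lie in arithmetic progressions determined by $q^*$ and $r^*$; hence the width of the surviving set in a fixed residue class is controlled by $a=\min(q^*,r^*,p-q^*,p-r^*)$, while the offset produced by solving the congruence for the free index is measured precisely by the $d$ with $adqr\equiv1\pmod p$. Using the palindromy $a_{pqr}(k)=a_{pqr}(\varphi(pqr)-k)$ to restrict to $k\le\varphi(pqr)/2$, the coefficient $a_{pqr}(k)$ becomes the difference between the number of positively and the number of negatively signed lattice points lying in a box of side $O(p)$ intersected with one residue class modulo $p$.

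Bounding this net count on both sides yields the inequalities, and the two halves of the theorem correspond to two increasingly sharp ways of organizing the cancellation. The crude count in a single residue class is $O(p)$, already of the correct shape. Bachman's bound is obtained by pairing positive against negative contributions from the flat binary part, so that at most $(p-1)/2$ survive, with a surplus of at most $a$ coming from the width of the progression; the competing term $d$ (respectively $p-d$) reflects the alternative of counting directly within the offset window, and the stated minimum records which constraint binds. Bzd\c{e}ga's sharper bound $2a+d$ (with companions $p+2a-d$ and $p-d$) is reached by additionally exploiting how the geometric $j$-summation, periodic modulo $pq$, interacts with the $r$-dilation, which confines the surviving terms to a shorter window.

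I expect the main obstacle to be pinning down the exact extremal constants rather than a bound of size $O(p)$: one must evaluate the signed lattice-point count exactly, and show that in the extremal residue class the positive and negative counts differ by precisely the claimed minimum, verifying in each regime which of the competing quantities (for instance $(p-1)/2+a$ versus $d$, or $2a+d$ versus $p-d$) is the binding one. Establishing the upper and lower bounds simultaneously and confirming that the minima are attained is the delicate combinatorial core; the reduction and translation steps above are essentially bookkeeping once the Lam--Leung structure is in hand.
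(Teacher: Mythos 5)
The paper does not prove this statement at all: Theorem \ref{GBmain} is quoted from Bachman \cite{B1} and Bzd\c{e}ga \cite{BZ} and is used as a black box in the proof of Theorem \ref{cycchal}. So the only meaningful comparison is between your proposal and those two source papers, and against that standard your proposal has a genuine gap --- one you in fact concede yourself.

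The opening reduction is fine and is indeed the standard first move: $\Phi_{pqr}(x)=\Phi_{pq}(x^r)/\Phi_{pq}(x)$, then $1/\Phi_{pq}(x)=-\Psi_{pq}(x)\sum_{j\ge 0}x^{jpq}$, then the Lam--Leung description of the binary coefficients, giving $a_{pqr}(k)$ as a signed count of triples $(k',\ell,j)$. At best this yields a bound of size $O(p)$. But the theorem is not an order-of-magnitude statement: its entire content is the exact constants $\min\bigl(\frac{p-1}{2}+a,\,p-d\bigr)$, $\min(2a+d,\,p-d)$ and their lower-bound companions. Your proposal never derives the role of $a$ and $d$; it merely asserts that $a$ ``controls the width'' and $d$ ``measures the offset,'' and in particular it never explains how the defining congruence $adqr\equiv 1\pmod{p}$ --- which couples \emph{both} inverses $q^*,r^*$ through their product --- emerges from the lattice-point structure, nor how the competing terms inside each minimum arise and when each one binds. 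The paragraph in which the inequalities would actually be proved is replaced by the admission that this is ``the delicate combinatorial core.'' That core is precisely what Bachman and Bzd\c{e}ga each devote a full paper to, using machinery your sketch does not contain: Bachman works from an exact formula for $a_{pqr}(k)$ as a difference of two lattice-point counting functions, analyzed case by case, while Bzd\c{e}ga bounds the coefficients via auxiliary $p$-periodic sequences and their partial sums. Neither argument is ``bookkeeping once the Lam--Leung structure is in hand.'' As it stands, your text is a plausible reading plan for those papers, not a proof of the stated bounds.
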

It is not difficult to show that
$$d=\min(\max(q^*,r^*),\max(p-q^*,p-r^*)).$$
\begin{cor}
Put $d_1=\min(d,p-d)$.
We have (G. Bachman)
$$|a_{pqr}(k)|\le \min\Big({p-1\over 2}+a,p-d_1\Big),$$
and (B. Bzd\c{e}ga)
$$|a_{pqr}(k)|\le \min\Big(2a+d_1,p-d_1\Big).$$
\end{cor}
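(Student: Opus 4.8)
The plan is to obtain each bound on $|a_{pqr}(k)|$ directly from the corresponding two-sided bound in Theorem \ref{GBmain}, using only the elementary fact that $-\ell\le t\le u$ implies $|t|\le\max(\ell,u)$. Note first that $a\ge 1$: since $1\le q^*,r^*\le p-1$, the quantity $a=\min(q^*,r^*,p-q^*,p-r^*)$ is at least $1$, so in particular $a\ge 0$, which is all I will use below.

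For the Bachman bound, set $L=\frac{p-1}{2}+a$, a quantity not depending on $d$. Theorem \ref{GBmain} reads $-\min(L,d)\le a_{pqr}(k)\le\min(L,p-d)$, whence $|a_{pqr}(k)|\le\max(\min(L,d),\min(L,p-d))$. I would then finish with the distributive law valid in any totally ordered set, $\max(\min(L,d),\min(L,p-d))=\min(L,\max(d,p-d))$, together with $\max(d,p-d)=p-\min(d,p-d)=p-d_1$. This gives exactly $|a_{pqr}(k)|\le\min(\frac{p-1}{2}+a,\,p-d_1)$.

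The Bzd\c{e}ga bound is the more delicate one, since its two bounding expressions are not of the clean form $\min(\text{const},\,\cdot)$. The manipulation I would make is to rewrite the lower bound as $p+2a-d=2a+(p-d)$ and to introduce the single function $g(t)=\min(2a+t,\,p-t)$. The upper bound is then $g(d)=\min(2a+d,\,p-d)$ and the lower bound is $g(p-d)=\min(2a+(p-d),\,d)$, so that $|a_{pqr}(k)|\le\max(g(d),g(p-d))$. Since $\{d,p-d\}=\{d_1,\,p-d_1\}$ with $d_1=\min(d,p-d)\le p/2$, it suffices to check $g(d_1)\ge g(p-d_1)$. This is immediate: both arguments of $g(d_1)=\min(2a+d_1,\,p-d_1)$ are at least $d_1$ (using $a\ge 0$ and $d_1\le p/2$), so $g(d_1)\ge d_1$, while $g(p-d_1)=\min(2a+p-d_1,\,d_1)=d_1$. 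Hence $\max(g(d),g(p-d))=g(d_1)=\min(2a+d_1,\,p-d_1)$, as claimed. I expect this reduction, recognizing that both Bzd\c{e}ga bounds are the same symmetric function evaluated at $d$ and at $p-d$, to be the only nontrivial point; the Bachman half follows at once from the lattice identity.
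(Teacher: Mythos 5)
Your proof is correct. The paper offers no proof of this corollary at all — it is stated as an immediate consequence of Theorem \ref{GBmain} — and your argument (bounding $|a_{pqr}(k)|$ by the maximum of the lower- and upper-bound magnitudes, then collapsing that maximum to the stated minimum via the distributive lattice identity $\max(\min(L,d),\min(L,p-d))=\min(L,\max(d,p-d))$ for the Bachman bound, and via the observation that both Bzd\c{e}ga bounds are the single function $g(t)=\min(2a+t,p-t)$ evaluated at $d$ and $p-d$, with $\{d,p-d\}=\{d_1,p-d_1\}$) is precisely the elementary verification the authors leave to the reader.
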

Let $1\le j,k\le p-1$ be integers. Put 
$$\alpha=\min(j,k,p-j,p-k),~~\delta=\min(\max(j,k),\max(p-j,p-k)),$$ and $\delta_1=\min(\delta,p-\delta)$.
Put $$GB(j,k)=\min\Big({p-1\over 2}+\alpha,p-\delta_1\Big){\rm ~and~}BB(j,k)=\min(2\alpha+\delta_1,p-\delta_1).$$ 
We can reformulate
the latter corollary in the following way.
\begin{cor}
\label{qstar}
If $q^*\equiv j(\mmod p)$ and $r^*\equiv k(\mmod p)$, then
$|a_{pqr}(k)|\le GB(j,k)$ and $|a_{pqr}(k)|\le BB(j,k)$.
\end{cor}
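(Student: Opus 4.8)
The plan is to recognize that Corollary~\ref{qstar} is nothing more than a transcription of the preceding corollary, with the arithmetic invariants $a$, $d$, $d_1$ of the triple $pqr$ replaced by the purely combinatorial quantities $\alpha$, $\delta$, $\delta_1$ built from the residues $j$ and $k$. First I would observe that the hypotheses $q^*\equiv j\pmod p$ and $r^*\equiv k\pmod p$, combined with the normalisations $1\le q^*,r^*\le p-1$ and $1\le j,k\le p-1$, force the \emph{exact} equalities $q^*=j$ and $r^*=k$, since two integers in $\{1,\dots,p-1\}$ that are congruent modulo $p$ must coincide. This is the sole role played by the congruence assumptions.

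Next I would match the invariants term by term. Directly from $a=\min(q^*,r^*,p-q^*,p-r^*)$ and $\alpha=\min(j,k,p-j,p-k)$, substituting $q^*=j$ and $r^*=k$ gives $a=\alpha$. Using the identity $d=\min(\max(q^*,r^*),\max(p-q^*,p-r^*))$ recorded just before the first corollary, the same substitution yields $d=\delta$, whence $d_1=\min(d,p-d)=\min(\delta,p-\delta)=\delta_1$. With these identifications in hand, the quantity $GB(j,k)=\min((p-1)/2+\alpha,\,p-\delta_1)$ is literally the Bachman bound $\min((p-1)/2+a,\,p-d_1)$, and $BB(j,k)=\min(2\alpha+\delta_1,\,p-\delta_1)$ is literally the Bzd\c{e}ga bound $\min(2a+d_1,\,p-d_1)$. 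Both of these absolute-value bounds were established in the preceding corollary (themselves obtained from Theorem~\ref{GBmain} by combining the upper and lower bounds via the elementary fact that $-L\le a_{pqr}(k)\le U$ implies $|a_{pqr}(k)|\le\max(L,U)$), so $|a_{pqr}(k)|\le GB(j,k)$ and $|a_{pqr}(k)|\le BB(j,k)$ follow immediately.

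I do not expect any genuine obstacle here: all of the analytic and combinatorial content is already packaged into Theorem~\ref{GBmain} and the preceding corollary, and what remains is only the bookkeeping $q^*=j$, $r^*=k$, $a=\alpha$, $d=\delta$, $d_1=\delta_1$. The one step meriting a line of care is the use of the closed form for $d$ in terms of the two maxima, but this is asserted as a given immediately above the statement, so the proof reduces to invoking that identity and reading off the definitions.
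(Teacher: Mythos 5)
Your proposal is correct and takes essentially the same route as the paper: the paper offers no separate argument for Corollary~\ref{qstar}, stating only that it is a reformulation of the preceding corollary, and your bookkeeping ($q^*=j$ and $r^*=k$ forced by the normalisations to $\{1,\dots,p-1\}$, hence $a=\alpha$, $d=\delta$ via the stated identity for $d$, and $d_1=\delta_1$) is exactly that reformulation made explicit.
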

\begin{Def}
Put
$$GB(p)=\{(j,k):1\le j,k\le p-1,~GB(j,k)\le 2p/3\},$$ and
$$BB(p)=\{(j,k):1\le j,k\le p-1,~BB(j,k)\le 2p/3\}.$$
The
cardinality of $ GB(p)$ and $BB(p)$ we denote by $N_{GB}(p)$, respectively
$N_{BB}(p)$. 
\end{Def}
It is an elementary, but
quite tedious, exercise to evaluate
these quantities.
\begin{pro}
\label{BBpcard}
Let $p>3$ be a prime. Then
\begin{equation*}
N_{GB}(p)=
\begin{cases}
{8\over 9}p^2-{16\over 9}p+{8\over 9} & \text{if $p\equiv 
1(\mmod 3)$};\\
{8\over 9}p^2-{8\over 9}p-{16\over 9} & \text{if 
$p\equiv 2(\mmod 3)$}.
\end{cases}
\end{equation*}
and
\begin{equation*}
N_{BB}(p)=
\begin{cases}
{25\over 27}p^2-({8\over 27}({p\over 3})+2)p+{73\over 27} & \text{if $p\equiv \pm 2(\mmod 9)$};\\
{25\over 27}p^2-({8\over 27}({p\over 3})+2)p+{37\over 27} & \text{otherwise}.
\end{cases}
\end{equation*}
\end{pro}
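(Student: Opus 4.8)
The plan is to reduce everything to a two-dimensional lattice-point count. First I would rewrite the auxiliary quantities in terms of $s=\min(j,k)$ and $t=\max(j,k)$, where $1\le s\le t\le p-1$. A direct check gives $\alpha=\min(s,\,p-t)$ and, since $\max(p-j,p-k)=p-\min(j,k)$, also $\delta=\min(t,\,p-s)$. The two involutions $(j,k)\mapsto(k,j)$ and $(j,k)\mapsto(p-j,p-k)$ generate a group of order $4$ under which $\alpha,\delta,\delta_1$, and hence $GB(j,k)$ and $BB(j,k)$, are invariant. I would therefore pass to the fundamental domain $D=\{\,1\le s\le t\le p-1,\ s+t\le p\,\}$, on which $s\le p-t$ and $t\le p-s$ collapse the minima to $\alpha=s$ and $\delta=t$, whence $\delta_1=\min(\delta,p-\delta)=\min(t,p-t)$. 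Each interior point of $D$ represents an orbit of size $4$, while points on the diagonal $s=t$ or the anti-diagonal $s+t=p$ (disjoint for odd $p$) represent orbits of size $2$, so $N_{GB}(p)$ and $N_{BB}(p)$ become weighted sums $\sum_{D}w(s,t)$ over the admissible region, with $w=4$ in the interior and $w=2$ on those two boundary segments.

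Next I would make the defining inequalities explicit. Writing $\min(A,B)\le 2p/3$ as ``$A\le 2p/3$ or $B\le 2p/3$'', the condition $GB(j,k)\le 2p/3$ becomes $\alpha\le (p+3)/6$ or $\delta_1\ge p/3$, and $BB(j,k)\le 2p/3$ becomes $2\alpha+\delta_1\le 2p/3$ or $\delta_1\ge p/3$. Thus in the coordinates $(s,t)=(\alpha,\delta)$ on $D$ both sets are unions of two very simple regions: a vertical strip $\{s\le (p+3)/6\}$ (resp. a triangle $\{2s+\delta_1\le 2p/3\}$) together with the horizontal band $\{p/3\le \delta_1\le 2p/3\}$, where $\delta_1=\min(t,p-t)$. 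Using inclusion--exclusion $|R_1\cup R_2|=|R_1|+|R_2|-|R_1\cap R_2|$ reduces each cardinality to a handful of lattice-point counts over triangles and rectangles.

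To evaluate these I would split $D$ along $t=p/2$ so that $\delta_1$ equals $t$ or $p-t$ on each piece, turning every region into a polygon bounded by horizontal and vertical lines (for the $GB$ count), and additionally by a line of slope $-2$ (for the $2s+\delta_1$ triangle in the $BB$ count). Summing over rows then produces arithmetic and triangular-number sums such as $\sum\lfloor(2p/3-\delta_1)/2\rfloor$. The corner abscissae sit at $p/6,p/3,2p/3$ for $GB$ and in addition at $2p/9,7p/9$ for $BB$, so the fractional parts of the cut lines against the integer lattice force the answer to depend on $p\bmod 3$ and $p\bmod 9$ respectively; the residue $p\bmod 3$ surfaces in the linear coefficient through the Legendre symbol $\left(\tfrac p3\right)=\pm1$, while $p\bmod 9$ affects only the constant term, exactly as in the two displayed formulas.

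The main obstacle is the exact lower-order bookkeeping rather than any conceptual difficulty. The $\delta_1\ge p/3$ band together with the diagonal and anti-diagonal weight corrections must be tracked to constant order, and the genuinely delicate piece is the slope-$(-2)$ boundary of the $BB$ triangle, where the floor in $\lfloor(2p/3-\delta_1)/2\rfloor$ interacts with the $\lfloor p/3\rfloor$-type endpoints and generates the full $p\bmod 9$ periodicity. As a safeguard against arithmetic slips I would first verify the leading terms by replacing the sums with the corresponding plane areas: the admissible region of $D$ has area $\tfrac29 p^2$ for $GB$ and $\tfrac14 p^2-\tfrac1{54}p^2$ for $BB$, which after multiplication by the generic weight $4$ reproduce $\tfrac89 p^2$ and $\tfrac{25}{27}p^2$, matching the claimed leading coefficients before the residue-dependent lower-order terms are pinned down.
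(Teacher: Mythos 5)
Your proposal is sound, and every claim in it that can be checked does check out: the identities $\alpha=\min(s,p-t)$ and $\delta=\min(t,p-s)$, the invariance of $\alpha,\delta,\delta_1$ under the order-$4$ group generated by $(j,k)\mapsto(k,j)$ and $(j,k)\mapsto(p-j,p-k)$, the orbit weights ($4$ generically, $2$ on $s=t$ and on $s+t=p$, no fixed points since $p$ is odd), the translation of the two conditions into ``$\alpha\le (p+3)/6$ or $\delta_1\ge p/3$'' resp.\ ``$2\alpha+\delta_1\le 2p/3$ or $\delta_1\ge p/3$'', and the areas $\tfrac{2}{9}p^2$ and $\tfrac{25}{108}p^2$, which indeed give the leading coefficients $\tfrac{8}{9}$ and $\tfrac{25}{27}$ after multiplying by the generic weight $4$. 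Your route differs from the paper's in its decomposition. The paper uses only the swap symmetry, writing $N_{BB}(p)$ as twice the count over $1\le j<k\le p-1$ plus a diagonal count, and then subdivides that half-square into pieces on which $BB(j,k)$ is a single linear expression; it also offers a labor-saving alternative: once one knows each contribution is a quadratic polynomial in $p$ whose coefficients depend only on $p\bmod 9$ (resp.\ $p\bmod 3$ for $GB$), one can determine the polynomials by evaluating the counts at a few primes and interpolating. Your extra use of the negation symmetry buys a fundamental domain $D$ on which the minima collapse globally to $\alpha=s$, $\delta=t$, so inclusion--exclusion applies to a union of just two simple regions instead of many case pieces, and the continuous-area computation gives an independent check on the quadratic term; the price is the orbit-weight bookkeeping on the two boundary segments. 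Note that your plan, exactly like the paper's, stops short of the floor-function arithmetic that produces the $p\bmod 3$ and $p\bmod 9$ corrections, so neither is a complete proof as written; if you want to finish cheaply, you can graft the paper's interpolation trick onto your setup, since your decomposition already shows the answer is quadratic in $p$ with coefficients periodic modulo $9$, after which evaluating $N_{GB}(p)$ and $N_{BB}(p)$ at finitely many primes (e.g.\ $N_{GB}(5)=N_{BB}(5)=16$) pins down all constants.
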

\begin{proof}
We give a sketch. Note that if $(j,k)\in BB(p)$, then also $(k,j)\in  BB(p)$. 
It thus follows that 
$$N_{BB}(p)=2\sum_{\bfrac{1\le j<k\le p-1}{(j,k)\in BB(p)}}1
+\sum_{\bfrac{1\le j\le p-1}{(j,j)\in  BB(p)}}1.$$
Let us concentrate on the first sum as it is more complicated to evaluate.
We divide up the $(j,k)$ region $1\le j<k\le p-1$ into pieces on which
$BB(j,k)$ takes on a value not involving a minimum or maximum anymore and compare
this value with $2p/3$. Each of these contributions turns out to be a polynomial
in $p$ that is at most quadratic and has coefficients that depend at most on the
residue of $p$ modulo 9. Working out each of these contributions and summing gives
the required result. Alternatively, after one has established that the final answer
is a quadratic polynomial depending at most on the residue of $p$ modulo 9, one
finds the formula for $N_{BB}(p)$ be evaluting it for various values of $p$ and inferring
the coefficients of the polynomial from this.\\
\indent For $N_{GB}(p)$ we find similarly that the result should be a quadratic polynomial
depending at most on the residue of $p$ modulo 3.
\end{proof}
\begin{proof}[Proof of Theorem \ref{cycchal}]
Given an integer $a$ coprime to $p$, we write
$a^*$ for the inverse of $a$ modulo $p$ satisfying
$1\le a^*\le p-1$.
If $n=pqr$ satisfies $3\le p<q<r$ with
$q\equiv j^{*}(\mmod{p})$ and $r\equiv k^{*}(\mmod{p})$ 
and $(j,k)\in BB(p)$,
then $n$ satisfies the corrected Sister Beiter conjecture
by Corollary \ref{qstar}. By Proposition
\ref{BBpcard}, we have $N_{BB}(p)=25p^2/27+O(p)$.
Now apply
Theorem \ref{main} with $\alpha=25/27$ 
and $M(p)=\{(j^{*},k^{*}):(j,k)\in BB(p)\}$.
\end{proof}
\subsection{Applications in cryptography}
In ~\cite{gaps} by Camburu et al., there is a ternary counting
problem that is related to attempts of
Hong et al.~\cite{ate} to provide a simple and exact
formula for the minimum Miller loop length in the
${\rm Ate}_i$ pairing arising in elliptic curve cryptography.
The problem there is to estimate
$$\{pqr\le x:p<q<r,~4(p-1)>q,~p^2>r\}.$$
Also various other ternary counting problems are
considered in Camburu et al.~\cite{gaps}.

\section*{Acknowledgments}
The first  author was supported in part
 by NRF (South Africa) Grants CPRR160325161141 and an A-rated researcher award and by CGA (Czech Republic) Grant 17-02804S. Part of this work was done when this author visited 
 the Max Planck Institute for Mathematics in Bonn from January to July, 2017. 
\par The fourth author is supported by the Austrian Science Fund (FWF) : Project F5507-N26, which is part of the special Research Program ``Quasi Monte Carlo Methods : Theory and Application''. Part of this work was done while the author was supported by the Japan
Society for the Promotion of Science (JSPS) ``Overseas researcher under Postdoctoral Fellowship of JSPS''.

\medskip\noindent {\footnotesize School of Mathematics, University of the Witwatersrand, Private Bag X3, Wits 2050, South
Africa;\\
Department of Mathematics, Faculty of Sciences, University of Ostrava, 30. dubna 22, 701 03
Ostrava 1, Czech Republic;\\
Max-Planck-Institut f\"ur Mathematik,
Vivatsgasse 7, D-53111 Bonn, Germany.\\
e-mail: {\tt florian.luca@wits.ac.za}}

\medskip\noindent {\footnotesize Max-Planck-Institut f\"ur Mathematik,
Vivatsgasse 7, D-53111 Bonn, Germany.\\
e-mail: {\tt moree@mpim-bonn.mpg.de}}

\medskip\noindent {\footnotesize School of Mathematics and Statistics,
University College Dublin, Belfield, Dublin 4, Ireland. \\
e-mail: {\tt robert.osburn@ucd.ie}}

\medskip\noindent {\footnotesize Institute of Financial Mathematics and Applied Number Theory,
JKU Linz, Altenbergerstraße 69, 4040 Linz, Austria.\\
e-mail: {\tt sumaia.saad\_eddin@jku.at}}

\medskip\noindent {\footnotesize Max-Planck-Institut f\"ur Mathematik,
Vivatsgasse 7, D-53111 Bonn, Germany.\\
e-mail:  {\tt alisa.sedunova@phystech.edu} }

\end{document}